\newtheorem{theorem}{Theorem}[section]
\newtheorem{lemma}[theorem]{Lemma}
\newtheorem{proposition}[theorem]{Proposition}
\newtheorem{corollary}[theorem]{Corollary}
\newtheorem{definition}{Definition}[section]
\newtheorem{remark}{Remark}[section]
\newtheorem{example}{Example}[section]
\DeclareFontFamily{U}{MnSymbolC}{}
\DeclareSymbolFont{MnSyC}{U}{MnSymbolC}{m}{n}
\DeclareFontShape{U}{MnSymbolC}{m}{n}{
    <-6>  MnSymbolC5
   <6-7>  MnSymbolC6
   <7-8>  MnSymbolC7
   <8-9>  MnSymbolC8
   <9-10> MnSymbolC9
  <10-12> MnSymbolC10
  <12->   MnSymbolC12}{}
\DeclareMathSymbol{\lefthook}{\mathbin}{MnSyC}{'270}
\title{Lagrangian Engel Structures}
\date{}
\begin{document}
\maketitle
\begin{center}{\bf Abstract} \end{center}
\begin{center}
\begin{minipage}{130mm}  % changed width to 130 mm
We study the geometry of Engel structures, which are 2-plane fields on 4-manifolds satisfying a generic condition, that are compatible with other geometric structures. A \em{Lagrangian} Engel structure is an Engel 2-plane field on a symplectic 4-manifold for which the 2-planes are Lagrangian with respect to the symplectic structure. We solve the equivalence problems for Lagrangian Engel structures and use the resulting structure equations to classify homogeneous Lagrangian Engel structures.  This allows us to determine all compact, homogeneous examples. Compact manifolds that support homogeneous Lagrangian Engel structures are diffeomorphic to quotients of one of a determined list of nilpotent or solvable 4-dimensional Lie groups by co-compact lattices.

{\bf Key words and phrases:} Lagrangian Engel structure, structure equation, homogeneous manifold, solvmanifold.

{\bf 1991 Mathematics Subject Classification:} 58H99.

\end{minipage}
\end{center}

\section{Introduction}
A \emph{distribution} is a subbundle $D\subset TM$ of the tangent bundle of a manifold $M$. We will consider certain distributions with special properties, for example, distributions with some integrability conditions.  We will study \emph{Engel structures}, which are certain non-integrable distributions defined on 4-manifolds. We will see that, locally, all Engel structures are isomorphic but the global theory of Engel structures is not trivial. A 4-manifold can carry many nonisomorphic Engel structures \cite{MR1350504}. There are relations between contact structures on 3-dimensional manfiolds and Engel structures. For example, V. Gershkovich \cite{MR1350504} proved that each Engel manifold carries a canonical one-dimensional foliation and an Engel structure defines a contact distribution on any three-dimensional submanifold transversal to the canonical foliation.

We will solve the equivalence problem for Lagrangian Engel structures and present the classification of compact quotients of homogeneous Lagrangian Engel structures. Engel structures (to be defined below) can be characterized in terms of the derived system construction.
 
 \begin{proposition} \cite{MR1083148}\label{pro:1}
 Given a Pfaffian system $I$, there exists a bundle map $\delta : I\rightarrow\Lambda^2(T^*M/I)$ that satisfies $\delta\omega \equiv d\omega\mod(I)$ for all $\omega\in\Gamma(I)$.
 \end{proposition}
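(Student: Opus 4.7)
The plan is to construct $\delta$ at the level of sections first, by setting $\delta(\omega) = [d\omega]$ where $[\,\cdot\,]$ denotes the class in the quotient $\Omega^2(M)/(I \wedge \Omega^1(M))$, and then to show this map is tensorial so that it descends to a bundle map. The identification to keep in mind is that, fiberwise, $\Lambda^2(T^*M/I)$ is naturally isomorphic to $\Lambda^2(T^*M)/(I \wedge T^*M)$, so sections of $\Lambda^2(T^*M/I)$ are exactly equivalence classes of 2-forms modulo the algebraic ideal generated by $I$.

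The first step is to verify that $\delta$ is well-defined on sections, which is trivial: if $\omega \in \Gamma(I)$, then $d\omega$ is a genuine 2-form on $M$, and its class modulo $I \wedge \Omega^1(M)$ is perfectly well-defined. The defining congruence $\delta\omega \equiv d\omega \pmod{I}$ then holds by construction.

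The crux of the argument is the second step, namely that $\delta$ is $C^\infty(M)$-linear, which is what upgrades it from a map on sections to a bundle map. Additivity is obvious from the linearity of $d$. For the multiplicative part, let $f \in C^\infty(M)$ and $\omega \in \Gamma(I)$; by the Leibniz rule,
\begin{equation*}
d(f\omega) = df \wedge \omega + f\,d\omega.
\end{equation*}
Since $\omega \in \Gamma(I)$, the term $df \wedge \omega$ lies in $I \wedge \Omega^1(M)$ and therefore vanishes modulo $I$. Hence $\delta(f\omega) = f\,\delta(\omega)$ in the quotient, establishing $C^\infty(M)$-linearity.

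By the standard correspondence between $C^\infty(M)$-linear maps of sections and bundle maps, $\delta$ arises from a vector bundle homomorphism $I \to \Lambda^2(T^*M/I)$, which is the desired map. I expect no substantive obstacle: the one point requiring care is the identification of $\Lambda^2(T^*M/I)$ with $\Lambda^2(T^*M)/(I \wedge T^*M)$, but this is purely linear algebra, fiberwise.
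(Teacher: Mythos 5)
Your proof is correct and is the standard argument: the paper itself cites this proposition from Bryant--Chern--Gardner--Goldschmidt--Griffiths without reproducing a proof, and your tensoriality argument (Leibniz rule kills $df\wedge\omega$ modulo the algebraic ideal, so $\delta$ is $C^\infty(M)$-linear and hence a bundle map) together with the fiberwise identification $\Lambda^2(T^*M/I)\cong\Lambda^2(T^*M)/(I\wedge T^*M)$ is exactly how the cited reference establishes it.
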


 \begin{definition}
 By Proposition \ref{pro:1}, we have a bundle map $\delta$. Set $I^{(1)} = \ker \delta$ and call $I^{(1)}$ \emph{the first derived system}. 
  Continuing with this construction, we can get a filtration
  \[I^{(k)}\subset \cdots \subset I^{(2)}\subset I^{(1)}\subset I^{(0)} = I,    \]
  defined inductively by
  \[I^{(k+1)} = (I^{(k)})^{(1)}\, .\]
$I^{(k)}$ is called \emph{the $kth$ derived system}. 
 \end{definition}
 Now we present the definition and characterization of Engel structures.
 
\begin{definition}[Engel Structure]
Given a 4-manifold $M$ and a Pfaffian system $I\subset T^*M$, an \emph{Engel structure} is a sub-bundle $D = I^\perp$ of the tangent bundle of $M$ that satisfies: $(1)$ $I$ is of rank 2, $2$  $I^{(1)}$ is of rank 1 and $3$ $I^{(2)} = 0$.  A manifold endowed with an Engel structure $D = I^\perp$ is called an \emph{Engel manifold}. 
\end{definition} 

\begin{definition}
Given an ideal $\mathcal I$ generated by a Pffafian system $I$, a vector field $\xi$ is called a \emph{Cauchy characteristic vector field} of  $\mathcal I$ if $\xi\lefthook \mathcal I \subset \mathcal I$. At a point $x\in M$, the set of Cauchy characteristic vector fields is
\[A(I)_x = \{\xi_x\in T_xM | \xi_x\lefthook\mathcal I_x \subset\mathcal I_x\} \subset I^\bot\]
and the \emph{retracting space} or \emph{Cartan system} is defined to be
\[C(I)_x = A(I)_x^\bot \subset T_x^*M\, .\]
\end{definition}
By the definition of Engel structure $I^\bot$, there is a canonical flag of sub-bundles
\[0\subset I^{(1)}\subset I\subset C(I^{(1)}) \subset T^*M\, .\]
V. Gershkovich \cite{MR1350504} proved the following theorem which can also be found in \cite{MR1452539}.
\begin{theorem}
If an orientable 4-manifold admits an orientable Engel structure, then it has trivial tangent bundle.
\end{theorem}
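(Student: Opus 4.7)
The canonical flag $0\subset I^{(1)}\subset I\subset C(I^{(1)})\subset T^*M$ dualizes to a canonical tangent flag
$$0\subset L\subset D\subset E\subset TM$$
of ranks $1,2,3,4$, where $D=I^\perp$, $E=(I^{(1)})^\perp$, and $L=C(I^{(1)})^\perp$ is the canonical Engel line field. My strategy is to extract from the Engel axioms two natural isomorphisms relating the four successive line bundle quotients $L$, $D/L$, $E/D$, $TM/E$, use orientability of $M$ and of $D$ to force each of them to be trivial, and then invoke the fact that every short exact sequence of real vector bundles over a paracompact base splits to obtain $TM\cong\mathbb{R}^4$.

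The first isomorphism comes from the Engel bracket condition. Dualizing the statement ``$I^{(1)}$ has rank $1$'' says that $[D,D]\subset E$ with bracket surjective onto $E/D$; the resulting skew tensorial map $\Lambda^2 D\to E/D$ of line bundles is therefore an isomorphism,
$$\Lambda^2 D\;\cong\;E/D.$$
The second isomorphism comes from $I^{(2)}=0$. For any local section $\omega$ of $I^{(1)}$, the restriction $d\omega|_E$ is tensorial (since $\omega$ annihilates $E$), its kernel is precisely the Cauchy characteristic line $L\subset E$ (by definition of $C(I^{(1)})$), and $I^{(2)}=0$ asserts that the descended $2$-form on $E/L$ is non-degenerate. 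Checking that $\omega\mapsto d\omega|_E$ is $C^\infty$-linear produces a canonical line bundle isomorphism $I^{(1)}\cong\Lambda^2(E/L)^*$; using $I^{(1)}=(TM/E)^*$ and $\Lambda^2(E/L)\cong(D/L)\otimes(E/D)$, this becomes
$$TM/E\;\cong\;(D/L)\otimes(E/D).$$

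Now I trivialize. Orientability of $D$ makes $\Lambda^2 D$ trivial, hence by the first isomorphism $E/D$ is trivial. The flag gives $\det TM\cong\det D\otimes(E/D)\otimes(TM/E)$, so orientability of $M$ combined with the triviality of $\det D$ and of $E/D$ forces $TM/E$ to be trivial. The second isomorphism then forces $D/L$ to be trivial, and finally the triviality of $\Lambda^2 D=L\otimes(D/L)$ forces $L$ to be trivial. Splitting the successive short exact sequences gives $TM\cong L\oplus(D/L)\oplus(E/D)\oplus(TM/E)$, a direct sum of trivial line bundles, so $TM$ is trivial.

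The step I expect to require the most care is the construction of the second line bundle isomorphism: one must verify $C^\infty$-linearity of $\omega\mapsto d\omega|_E$, identify its kernel with $L$ directly from the definition of the Cauchy characteristic space, and check that $I^{(2)}=0$ is exactly the non-degeneracy of the induced $2$-form on the rank-$2$ bundle $E/L$. Once that canonical isomorphism is in hand, the rest of the argument is bookkeeping with determinants of line bundles and the standard splitting of short exact sequences of real vector bundles.
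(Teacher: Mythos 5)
Your argument is correct and is essentially the standard one: the paper itself states this theorem without proof, attributing it to Gershkovich (and to Kazarian--Montgomery--Shapiro), and the argument in those sources is exactly this analysis of the canonical flag $L\subset D\subset E\subset TM$ together with the bracket-induced isomorphisms $\Lambda^2 D\cong E/D$ and $(D/L)\otimes(E/D)\cong TM/E$, followed by the observation that real line bundles are trivial iff orientable and that $w_1$ is additive under tensor product. The only point to make explicit is that $L\subset D$ (needed for $\Lambda^2(E/L)\cong(D/L)\otimes(E/D)$ and for $\det D\cong L\otimes(D/L)$), which follows from the canonical coflag $I\subset C(I^{(1)})$ recorded in the paper; with that noted, your proof is complete.
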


T. Vogel \cite{MR2480602} proved the converse of the above theorem:
 \begin{theorem}
Every parallelizable 4-manifold admits an orientable Engel structure.
\end{theorem}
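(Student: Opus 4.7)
The plan is to prove Vogel's theorem by building the Engel structure piece by piece along a suitable handle decomposition, using the parallelization to obtain coherent matching of the characteristic line field. A parallelizable 4-manifold has a nowhere-vanishing vector field, hence Euler characteristic zero; by Asimov's existence theorem for round handle decompositions in dimension $\geq 4$, such $M$ admits a decomposition into round handles of the form $S^1\times D^i\times D^{3-i}$ for $i=0,1,2,3$. The first step is therefore to fix such a decomposition. Because each round handle has a distinguished $S^1$-factor, I will arrange (after an isotopy that is permitted by parallelizability) that these $S^1$-factors fit together into a 1-dimensional foliation on $M$, which will play the role of the Cauchy characteristic line field $I^{(1)\perp}\cap D$ of the eventual Engel structure.

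The second step is to construct, on each individual round handle, a local model Engel structure whose characteristic foliation is precisely the $S^1$-factor. On $S^1\times D^3$ one can take an explicit Engel form built from a standard contact form on $D^3$ pulled back and combined with $dt$; more precisely, for a contact form $\alpha$ on the $D^3$ slice together with a deformation parameter, the 1-forms $\alpha$ and a second form involving $dt$ generate a rank-2 Pfaffian system $I$ with $I^{(1)}$ of rank 1 and $I^{(2)}=0$. These local models extend to each round handle because the attaching regions are neighborhoods of $S^1\times S^{i-1}\times D^{3-i}$, and the characteristic foliation restricts to the $S^1$-factor on these collars.

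The third and main step is to glue these local Engel structures across the attaching regions. The key observation of Vogel is that an Engel structure is equivalent, along a hypersurface transverse to its characteristic foliation, to a contact structure together with an even-contact (or flag) structure extending it. So gluing reduces to extending a given contact structure on the boundary of one handle across the attaching region to match the contact structure induced from the neighboring handle. This is a problem in contact topology on 3-manifolds, for which one has Lutz-type twists and the full flexibility afforded by Eliashberg's classification of overtwisted contact structures. The parallelization of $M$ enters here: it trivializes the bundle of 2-plane fields and hence reduces the homotopical problem of matching contact structures (up to homotopy through plane fields) to a purely combinatorial problem over the handle decomposition.

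The main obstacle is precisely this matching step. Producing compatible contact structures along each attaching sphere is not automatic, and one must allow oneself the freedom to modify the Engel structure on each handle (by prolongation and Lutz-type operations in the transverse contact picture) so that the induced contact structures on the overlaps agree up to contactomorphism rather than only up to homotopy. I expect the bulk of the work to go into showing that the obstructions to such matching all vanish under the parallelizability hypothesis, so that after finitely many adjustments one obtains a globally defined rank-2 Pfaffian system $I$ on $M$ whose derived flag satisfies $I^{(1)}$ of rank 1 and $I^{(2)}=0$, i.e.\ an orientable Engel structure $D=I^\perp$.
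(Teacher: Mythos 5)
The paper does not actually prove this statement: it is quoted as Vogel's theorem with a citation to his Annals paper, so there is no internal proof to compare against. What you have written is a recognizable outline of Vogel's published strategy --- Asimov round handle decompositions (available since parallelizability forces $\chi(M)=0$), model Engel structures on round handles $S^1\times D^i\times D^{3-i}$ whose characteristic foliation is the $S^1$-direction, and gluing controlled by the contact structures induced on hypersurfaces transverse to the characteristic foliation, with overtwistedness supplying flexibility. As a summary of the architecture of the argument this is essentially accurate.

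However, as a proof it has a genuine and decisive gap, and you have located it yourself: the entire content of the theorem lives in the matching step that you defer with ``I expect the bulk of the work to go into showing that the obstructions to such matching all vanish.'' That expectation is the theorem. Concretely, what is missing is (i) a precise local normal form for an Engel structure near a transverse hypersurface --- one needs not just the induced contact structure but also the rotation (``development map'') of the Engel plane field inside the even-contact hyperplane field along the characteristic leaves, and controlling this twisting is what forces Vogel's ``vertical modifications of the boundary''; (ii) an actual argument that the contact structures induced on the two sides of each attaching region can be made isotopic, not merely homotopic as plane fields, which requires invoking the overtwisted classification in a specific way and checking that the necessary homotopy-theoretic data is realized by the parallelization; and (iii) verification that the modifications needed at one attaching region do not destroy the matching already achieved at another. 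None of these is routine, and each occupies a substantial portion of Vogel's roughly sixty-page paper. Your proposal is a correct table of contents for that proof, not a proof.
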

Thus for an orientable 4-manifold, parallelizability is equivalent to the existence of an orientable Engel structure. This is a global characterization of manifolds that support orientable Engel structures. Locally, we have the following \emph{Engel normal form} \cite{MR1083148}, which implies that there is no local invariant for Engel structures, i.e., all Engel structures are locally equivalent.
\begin{theorem}[Engel normal form] \label{the:1}
Let $I$ be a Pfaffian system on $M^4$ such that $I^\perp\subset TM$ is an Engel structure. Then every point of $M$ has an open neighborhood $U$ on which there exists local coordinates  $(x, y_0, y_1, y_2): U\rightarrow \mathbb{R}^4$ such that
 \[I|_{U} =\{dy_0-y_1 dx, dy_1 - y_2 dx\}\, . \]
\end{theorem}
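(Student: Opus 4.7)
The plan is to reduce to the classical Pfaff--Darboux theorem applied to the rank-one derived system $I^{(1)}$, using the canonical flag $0 \subset I^{(1)} \subset I \subset C(I^{(1)}) \subset T^*M$ already recorded in the excerpt, and then pin down the remaining generator of $I$ by a single change of coordinates. First I would unpack the flag: choosing a local generator $\omega_1$ of $I^{(1)}$ and an $\omega_2 \in I$ completing a basis, the definition of $I^{(1)}$ yields $d\omega_1 \equiv \omega_2 \wedge \theta \pmod{\omega_1}$ for some 1-form $\theta$, and the condition $I^{(2)} = 0$ is equivalent to $\omega_1 \wedge \omega_2 \wedge \theta \neq 0$. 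Hence $C(I^{(1)}) = \langle \omega_1, \omega_2, \theta \rangle$ has rank $3$, so $\omega_1$ has Pfaff class exactly $3$, and the Pfaff--Darboux theorem furnishes local coordinates $(x, y_0, y_1, s)$ on a neighborhood $U$ with
\[
I^{(1)}\big|_U \,=\, \{dy_0 - y_1\, dx\}, \qquad C(I^{(1)})\big|_U \,=\, \langle dx,\, dy_0,\, dy_1 \rangle.
\]

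Since $I$ has rank $2$ and lies inside $C(I^{(1)})$, any second generator of $I$ can be written, modulo $\omega_1 = dy_0 - y_1\, dx$, in the form $a\,dy_1 + b\,dx$ for functions $a, b$ on $U$. The case $a \equiv 0$ I would rule out by a direct computation showing that then $d\omega_2 \equiv 0 \pmod{I}$, which would force $\omega_2 \in I^{(1)}$ and contradict $\operatorname{rank} I^{(1)} = 1$. After shrinking $U$ and rescaling $\omega_2$, I may therefore assume $\omega_2 = dy_1 + b\, dx$. Computing $d\omega_2 \pmod{I}$ in these coordinates then translates ``$\operatorname{rank} I^{(1)} = 1$'' into $\partial b/\partial s \neq 0$, so $y_2 := -b(x, y_0, y_1, s)$ defines a local diffeomorphism $(x, y_0, y_1, s) \mapsto (x, y_0, y_1, y_2)$ that fixes the first three coordinates (and hence $\omega_1$) while sending $\omega_2$ to $dy_1 - y_2\, dx$.

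The main conceptual work is the rank-three Cartan system calculation that unlocks Pfaff--Darboux; after that, everything reduces to linear algebra in the rank-two quotient $C(I^{(1)})/I^{(1)}$ plus a single reparametrization along the $s$-fiber. The one point that requires attention is verifying that both the rescaling of $\omega_2$ and the change of variables preserve $\omega_1$, which is automatic because neither operation touches the coordinates $(x, y_0, y_1)$.
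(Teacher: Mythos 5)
The paper gives no proof of this statement; it is imported verbatim from \cite{MR1083148}, so there is nothing in-text to compare your argument against. Your strategy --- Pfaff--Darboux for the class-$3$ generator of $I^{(1)}$, then normalization of the second generator inside the rank-$3$ Cartan system --- is the standard route, and most of the individual steps (the identification $C(I^{(1)})=\langle\omega_1,\omega_2,\theta\rangle$, the reduction of $\omega_2$ to $a\,dy_1+b\,dx$ modulo $\omega_1$, the translation of $\operatorname{rank} I^{(1)}=1$ into $\partial b/\partial s\neq 0$, and the substitution $y_2=-b$) are correct.

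There is, however, a genuine gap at the normalization of $a$. You rule out $a\equiv 0$ and then conclude ``therefore $\omega_2=dy_1+b\,dx$ after rescaling,'' but dividing by $a$ on a neighborhood of the given point $p$ requires $a(p)\neq 0$, and excluding $a\equiv 0$ does not give you that; shrinking $U$ around $p$ cannot help if $a(p)=0$. This case really occurs: with $\omega_1=dy_0-y_1\,dx$ and $\omega_2=s\,dy_1+dx$ one checks $d\omega_1\wedge\omega_1\wedge\omega_2=0$, $\omega_1\wedge d\omega_1\neq 0$, and $d\omega_2\wedge\omega_1\wedge\omega_2=ds\wedge dy_1\wedge dy_0\wedge dx\neq 0$, so all three Engel conditions hold near $\{s=0\}$ even though $a$ vanishes there. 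The repair is standard but is a missing idea in your write-up: $a$ and $b$ cannot vanish simultaneously (else $\omega_2$ would be a multiple of $\omega_1$ at $p$, contradicting $\operatorname{rank} I=2$), and the Legendre-type change of Pfaff coordinates $\tilde x=y_1$, $\tilde y_0=y_0-xy_1$, $\tilde y_1=-x$ preserves the normal form $\omega_1=d\tilde y_0-\tilde y_1\,d\tilde x$ while interchanging $dx$ and $dy_1$ up to sign, so it converts the case $a(p)=0$, $b(p)\neq 0$ into the case you do handle. With that case added, your argument closes up.
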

In this paper, we will consider Lagrangian Engel structures.
\begin{definition}
A \emph{Lagrangian Engel structure} $(M, \Omega, D)$ is a 4-manifold $M$ endowed with \emph{a symplectic form $\Omega$ and an Engel 2-plane field $D$ that is Lagrangian for $\Omega$}. If we let $I = D^\perp\subset T^*M$ denote the annihilator, then $\Omega\in \langle I\rangle$.
\end{definition}

\section{Geometry of Lagrangian Engel Structures}
A coframing $\omega = (\omega_1, \omega_2, \omega_3, \omega_4)$ such that the symplectic structure can be written as
\[\Omega= \omega_1\wedge\omega_3 + \omega_2\wedge\omega_4\]
while $I = \langle\omega_1, \omega_2\rangle$ and $I^{(1)} = \langle \omega_1\rangle$ will be said to be \emph{0-adapted} to $(M, \Omega, D)$.

\begin{proposition}
The 0-adapted coframings are the sections of a $G$-structure on $M$ where $G\subset GL(4, \mathbb{R})$ is the 6-dimensional subgroup
\begin{equation}\label{eq:50}
G =\left \{  \left.\left[ {\begin{array}{cc}
                                    B_{11} & 0\\
                                   (B_{11}^T)^{-1} S & (B_{11}^T)^{-1}\\
                                    \end{array} } \right]
                                    \right | B_{11}= \left[ {\begin{array}{cc}
                                  b_{11}   & 0\\
                                   b_{21} & b_{22}\\
                                    \end{array}  }\right] \text{ and }
                                     S\in\mathbb{R}^{2\times 2}, S = S^T\right\}
                                     \end{equation}
\end{proposition}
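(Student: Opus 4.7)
The plan is to reduce the problem to a pointwise linear algebra computation: fix $x \in M$, let $\omega = (\omega_1,\ldots,\omega_4)$ be any 0-adapted coframing at $x$, and characterize those $A \in GL(4,\mathbb{R})$ for which $\tilde\omega_i = \sum_j A_{ij} \omega_j$ is again 0-adapted. The set of such $A$ is the stabilizer subgroup $G$, and showing it has the stated form will identify the $G$-structure.

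First I would impose the two flag conditions. The condition $\langle \tilde\omega_1, \tilde\omega_2\rangle = I$ forces the upper-right $2\times 2$ block of $A$ to vanish, and the further condition $\langle \tilde\omega_1\rangle = I^{(1)}$ forces $A_{12} = 0$. Writing $A$ in $2\times 2$ block form $\begin{bmatrix} B_{11} & 0 \\ B_{21} & B_{22}\end{bmatrix}$, one obtains $B_{11}$ lower triangular, matching the required shape.

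Next I would impose the symplectic condition. Encoding $\Omega$ by the matrix $J = \begin{bmatrix} 0 & I_2 \\ -I_2 & 0\end{bmatrix}$, invariance reads $A^T J A = J$. In block form this unpacks as $B_{11}^T B_{22} = I_2$, giving $B_{22} = (B_{11}^T)^{-1}$, together with the condition that $B_{11}^T B_{21}$ be symmetric. Setting $S := B_{11}^T B_{21}$, a free symmetric $2\times 2$ matrix, gives $B_{21} = (B_{11}^T)^{-1} S$ and recovers exactly the group displayed in \eqref{eq:50}. The dimension count $3+3=6$ confirms the stated dimension.

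Finally, to conclude one has a genuine $G$-structure rather than just a pointwise stabilizer, I would verify that 0-adapted coframings exist locally around every point. Choose a local section $\omega_1$ spanning $I^{(1)}$, extend to a local basis $\omega_1, \omega_2$ of $I$; since $I$ is Lagrangian, $\Omega$ descends to a nondegenerate pairing between $I$ and $T^*M/I$, so a standard Darboux-type adjustment of any complementary $\omega_3, \omega_4$ brings $\Omega$ into the form $\omega_1 \wedge \omega_3 + \omega_2 \wedge \omega_4$. Smoothness of the resulting principal $G$-subbundle of the coframe bundle follows from smoothness of $I$, $I^{(1)}$, and $\Omega$. The only subtle point in the argument is keeping conventions consistent in the block computation of $A^T J A = J$; once that is pinned down, every step is mechanical.
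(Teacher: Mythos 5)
Your proposal is correct and follows essentially the same route as the paper: impose the flag conditions to force the block-triangular shape with $B_{11}$ lower triangular, then unpack $A^TJA=J$ to get $B_{22}=(B_{11}^T)^{-1}$ and $S:=B_{11}^TB_{21}$ symmetric. Your added check that 0-adapted coframings exist locally (so that one really gets a $G$-structure, not just a pointwise stabilizer) is a worthwhile supplement the paper leaves implicit, but it does not change the substance of the argument.
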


\begin{proof}
Assume $(\tilde\omega_1, \tilde\omega_2, \tilde\omega_3, \tilde\omega_4)$ is a new coframing and the Engel structure in the new coframing is $I = \langle\tilde\omega_1, \tilde\omega_2\rangle$ and $I^{(1)} = \langle\tilde\omega_1\rangle$. According to the definition of a coframing on a manifold, there exists a matrix
\[ B =\left[ {\begin{array}{cc}
                                    B_{11} & B_{12}\\
                                   B_{21} & B_{22}\\
                                    \end{array} } \right] 
\]                                    
such that                                                                      
 \[\left[ {\begin{array}{c}
                                   \tilde\omega_1 \\
                                   \tilde\omega_2\\
                                   \tilde\omega_3\\
                                   \tilde\omega_4\\
                                    \end{array} } \right] = B \left[ {\begin{array}{c}
                                   \omega_1 \\
                                   \omega_2\\
                                   \omega_3\\
                                   \omega_4\\
                                    \end{array} } \right],\]
where $B_{11}, B_{12}, B_{21},B_{22}$ are $2\times 2$ matrices.
Since $I = \langle\tilde\omega_1, \tilde\omega_2\rangle = \langle\omega_1, \omega_2\rangle$ , the block $B_{12} = 0$. 

Let
\[J = \left[ {\begin{array}{cc}
                                    0 & I_2\\
                                   -I_2 & 0\\
                                    \end{array} } \right],\]
where $I_2$ is the identity matrix of dimension 2. To keep the symplectic structure invariant under the transformation, the matrices $B_{ij}$ satisfy
\[\left[ {\begin{array}{cc}
                                    B_{11} & 0\\
                                   B_{21} & B_{22}\\
                                    \end{array} } \right] ^TJ\left[ {\begin{array}{cc}
                                    B_{11} & 0\\
                                   B_{21} & B_{22}\\
                                    \end{array} } \right] = J.\]
Thus
\begin{align}\label{eq:22}
B_{11}^T B_{21} &=  B_{21}^T B_{11}\, ,\nonumber\\
B_{11}^T B_{22} &=  I_2\, .
\end{align}
Define $S = B_{11}^T B_{21} $. Then from (\ref{eq:22}), $S = S^T$. The element of the structure group can be written as
\[ B = \left[ {\begin{array}{cc}
                                    B_{11} & 0\\
                                   (B_{11}^T)^{-1} S & (B_{11}^T)^{-1}\\
                                    \end{array} } \right].\]

Since $I^{(1)} = \langle\omega_1\rangle = \langle\tilde\omega_1\rangle$, $B_{11}$ must be of the form $ \left[ {\begin{array}{cc}
                                  b_{11}   & 0\\
                                   b_{21} & b_{22}\\
                                    \end{array} } \right]$, where $ b_{11},  b_{21} , b_{22}$ can be any functions.
  
\noindent Therefore, the structure group is of the form (\ref{eq:50}).                             
\end{proof}

A Lagrangian Engel structure defines a $G$-structure, where $G$ is defined by (\ref{eq:50}).  We will prove that after reduction of the structure group, the manifold with a Lagrangian Engel structure belongs to at least one of the following categories:
\begin{enumerate}
\item  the manifold is not compact
\item there exists a canonical coframing for the Lagrangian Engel structure on the manifold
\end{enumerate}

Suppose $I = \langle\omega_1, \omega_2\rangle,\, I^{(1)} = \langle\omega_1\rangle$ and  $I^\perp$ is an Engel structure, then
\begin{align}\label{eq:23}
d\omega_1 &\not\equiv 0\ \ \ \ \ \ \ \mod\ \ \omega_1\, ,\nonumber\\
d\omega_1 &\equiv 0\ \ \ \ \ \ \ \mod\ \ \omega_1, \omega_2\, ,\nonumber\\
d\omega_2 &\not\equiv 0\ \ \ \ \ \ \ \mod\ \ \omega_1, \omega_2\, .
\end{align}

By (\ref{eq:23}), there exists a function $A\neq 0$ such that
\[d\omega_2 \equiv A\, \omega_3\wedge\omega_4\ \ \ \ \ \ \ \mod\ \ \omega_1, \omega_2\]
We can arrange $A = 1$ by dividing $\omega_2$ by $A$. Such coframings will be said to be \emph{1-adapted}. They are the sections of a $G_1$-structure, where $G_1\subset G$ is defined by
\begin{equation}\label{eq:31}
b_{11}b_{22}^2 = 1\, .
\end{equation}
Now $B_{11}$ is of the form $ \left[ {\begin{array}{cc}
                                  b_{22}^{-2}  & 0\\
                                   b_{21} & b_{22}\\
                                    \end{array} } \right]$
and
$B_{11}^{-1} =  \left[ {\begin{array}{cc}
                                  b_{22}^{2}  & 0\\
                                   -b_{22}b_{21} & b_{22}^{-1}\\
                                    \end{array} } \right]$.
After this arrangement,
\begin{equation}\label{eq:35}
d\omega_2 \equiv \omega_3\wedge\omega_4\ \ \ \ \ \ \ \mod\ \ \omega_1, \omega_2\, .
\end{equation}

By (\ref{eq:23}), there exist functions $p_3$ and $p_4$ such that
\begin{equation}\label{eq:100}
d\omega_1 \equiv (p_3\omega_3 + p_4\omega_4)\wedge\omega_2\ \ \ \ \ \ \ \mod\ \ \omega_1
\end{equation}
and at least one of $p_3$ and $p_4$ is nonzero. Since we will mainly focus on the classification of homogeneous Lagrangian Engel structures, we will study the cases where either $p_3\equiv 0$ or $p_3$ never vanishes.

Recall that the symplectic structure is $\Omega= \omega_1\wedge\omega_3 + \omega_2\wedge\omega_4$. By (\ref{eq:100}),
\begin{equation}\label{eq:59}
\omega_1\wedge d\omega_1\wedge\omega_4 =  \frac{p_3}{2}\,\Omega\wedge\Omega\, .
\end{equation}
If the coframing is changed under the structure group $G_1$, the function $p_3$ is changed to $b_{22}^{-5} p_3$. Thus $p_3$ is well-defined up to scaling by $b_{22}^{-5}$. By (\ref{eq:100}), the Cartan system is $C(\langle\omega_1\rangle) = \langle \omega_1, \omega_2, (p_3\omega_3 + p_4\omega_4)\rangle$ and the symplectic complement of $\langle\omega_1\rangle$ is $\langle\omega_1\rangle^\perp = \langle\omega_1, \omega_2, \omega_4\rangle$. Generally, $C(\langle\omega_1\rangle) \neq \langle\omega_1\rangle^\perp$.  If $C(\langle\omega_1\rangle) \neq \langle\omega_1\rangle^\perp$, this type of Lagrangian Engel structures is said to be 
\emph{generic}.  If $C(\langle\omega_1\rangle) = \langle\omega_1\rangle^\perp$, this type of Lagrangian Engel structures is said to be \emph{non-generic}. 
%
%We will study the geometry of Lagrangian Engel structures for the following 2 cases: 
%\begin{enumerate}
%\item $p_3$ never vainishes, that is said to be generic case.
%\item $p_3\equiv 0$, that is said to be non-generic case.
%\end{enumerate}

%From equation (\ref{eq:23}), $\omega_1\wedge d\omega_1\wedge\omega_4$  is uniquely defined up to a multiple of the volume form. There exists a function $p_3$ such that
%\begin{equation}\label{eq:59}
%\omega_1\wedge d\omega_1\wedge\omega_4 =  \frac{p_3}{2}\,\Omega\wedge\Omega
%\end{equation}
%
\subsection{Geometry of Lagrangian Engel Structures in Generic Case}
In the generic case, we have the following theorem:

\begin{theorem}[Lagrangian Engel Structures in Generic Case]\label{th:LG}
Given a symplectic manifold $(M, \Omega, I)$ with a symplectic structure $\Omega$ and an Engel structure $D = I^\perp$. On the domain where $p_3 \neq 0$ in equation (\ref{eq:59}), there exists a unique $0$-adapted coframing $\omega = (\omega_1, \omega_2, \omega_3, \omega_4)$ satisfying
\begin{align*}
d\omega_1 &= \omega_3\wedge\omega_2 + (a\omega_3 + b\omega_4)\wedge\omega_1\, ,\\
d\omega_2 &=(c\omega_2 + e\omega_3 + f\omega_4)\wedge\omega_1 +  \omega_3\wedge\omega_4\, ,
\end{align*}
where $a,b,c,e,f$ are functions on $M$.
\end{theorem}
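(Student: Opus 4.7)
The plan is to apply Cartan's equivalence method to the 1-adapted $G_1$-structure, using torsion normalizations to reduce the five-dimensional group $G_1$ to the identity and thereby produce a canonical coframing. The group $G_1$ is parameterised by $b_{22}$, $b_{21}$, and the three entries of the symmetric matrix $S$; the strategy is to spend each of these five parameters in turn on exactly one torsion coefficient: $b_{22}$ on $p_3$, $b_{21}$ on $p_4$, and the three entries of $S$ on the $\omega_1\wedge\omega_2$ coefficient of $d\omega_1$ together with the $\omega_2\wedge\omega_3$ and $\omega_2\wedge\omega_4$ coefficients of $d\omega_2$.

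The first two reductions are direct. The discussion preceding the theorem already records that $p_3 \mapsto b_{22}^{-5}p_3$, so because $p_3 \neq 0$ on the generic locus the signed real fifth root exists and uniquely determines $b_{22}$ making $\tilde p_3 = 1$. With $b_{22}$ so fixed, substituting $\tilde\omega_2 = b_{21}\omega_1 + b_{22}\omega_2$ into $d\tilde\omega_1 \bmod \tilde\omega_1$ shows that $\tilde p_4$ is affine in $b_{21}$ with leading coefficient $p_3 b_{22}^{-3}\neq 0$, so a unique value of $b_{21}$ achieves $\tilde p_4 = 0$. On the resulting sub-bundle the structure group consists only of the symmetric matrices $S$, and the equations read
\begin{align*}
d\omega_1 &= \omega_3\wedge\omega_2 + (a\omega_3 + b\omega_4 + e'\omega_2)\wedge\omega_1,\\
d\omega_2 &= \omega_3\wedge\omega_4 + (c\omega_2 + e\omega_3 + f\omega_4)\wedge\omega_1 + (g\omega_3 + h\omega_4)\wedge\omega_2
\end{align*}
for some torsion functions $a,b,c,e,e',f,g,h$.

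For the final reduction the $S$-action fixes $\omega_1,\omega_2$ and sends $\omega_3 \mapsto \omega_3 + S_{11}\omega_1 + S_{12}\omega_2$ and $\omega_4 \mapsto \omega_4 + S_{12}\omega_1 + S_{22}\omega_2$. Expanding the base terms $\omega_3\wedge\omega_2$ in $d\omega_1$ and $\omega_3\wedge\omega_4$ in $d\omega_2$ in the new coframing contributes shifts of $-S_{11}$ to the $\omega_1\wedge\omega_2$ coefficient of $d\omega_1$, of $+S_{22}$ to the $\omega_2\wedge\omega_3$ coefficient of $d\omega_2$, and of $-S_{12}$ to the $\omega_2\wedge\omega_4$ coefficient of $d\omega_2$. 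The resulting $3\times 3$ system for $(S_{11},S_{12},S_{22})$ is triangular with unit diagonal, so it has a unique solution that simultaneously kills $e'$, $g$, and $h$. At that point the structure group has been reduced to the identity, the coframing is canonical, and the structure equations take precisely the form claimed, with $a,b,c,e,f$ the surviving torsion functions.

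The main obstacle is the bookkeeping required to track how the several group parameters interact in the cross terms at each stage, in particular verifying that each successive normalisation is a nondegenerate scalar equation in the next parameter; the generic hypothesis $p_3 \neq 0$ is exactly what guarantees nondegeneracy of the first two reductions, and the final triangular $S$-system is automatically so.
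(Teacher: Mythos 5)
Your proposal is correct and follows essentially the same route as the paper: normalize $p_3=1$ to fix $b_{22}$, normalize $p_4=0$ to fix $b_{21}$, and then spend the three entries of $S$ on the $\omega_1\wedge\omega_2$ torsion of $d\omega_1$ and the $\omega_2\wedge\omega_3$, $\omega_2\wedge\omega_4$ torsions of $d\omega_2$. The only cosmetic difference is that you package the last step as a single triangular linear system for $(S_{11},S_{12},S_{22})$ where the paper performs the same three normalizations sequentially.
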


\begin{proof}
Under the transformation of the structure group, the structure equation is transformed to
\begin{equation}\label{eq:49}
d\omega_1 \equiv\left(p_3b_{22}^5\omega_3 + b_{22}^2(-p_3b_{22}^2b_{21} + p_4)\omega_4\right)\wedge\omega_2\ \ \ \ \ \ \ \mod\ \ \omega_1\, .
\end{equation}
By scaling $\omega_1$ via $b_{22}^5$, we can arrange $p_3 = 1$. This fixes $b_{22} = 1$.  By adding a multiple of $\omega_4$ to $\omega_3$, we can arrange $p_4 = 0$. This fixes $b_{21} = 0$. The structure equation is
\begin{equation}\label{eq:51}
d\omega_1\equiv \omega_3\wedge\omega_2\ \ \ \ \ \ \ \mod\ \ \omega_1\, .
\end{equation}
The element of the structure group reduces to the following form
\[\left[ {\begin{array}{cccc}
                                    1 & 0 & 0& 0\\
                                    0 & 1 & 0& 0\\
                                    S_{11}  &S_{12} & 1 & 0\\
                                    S_{12} & S_{22} & 0 & 1
                                    \end{array} } \right]\, . \
\]
Recall that $d\omega_2\equiv \omega_3\wedge\omega_4\ \mod\omega_1,\omega_2$. Thus there exist functions $v_3$ and $v_4$ such that
\begin{equation}
d\omega_2\equiv (v_3\omega_3 + v_4\omega_4)\wedge\omega_2 +  \omega_3\wedge\omega_4\ \ \ \ \ \ \ \mod\ \ \omega_1.
\end{equation}

By adding a multiple of $\omega_2$ to $\omega_3$, we can arrange $v_4 = 0$. This fixes $S_{12} = 0$. By adding a multiple of $\omega_2$ to $\omega_4$, we can arrange $v_3 = 0$. This fixes $S_{22} = 0$. 
Thus
\begin{equation}
d\omega_2\equiv \omega_3\wedge\omega_4\ \ \ \ \ \ \ \mod\ \ \omega_1.
\end{equation}
From equation (\ref{eq:51}), there exist functions $u_2,u_3, u_4$ such that
\begin{equation}
d\omega_1= \omega_3\wedge\omega_2 + (u_2\omega_2 + u_3\omega_3 + u_4\omega_4)\wedge\omega_1\, .
\end{equation}
By adding a multiple of $\omega_1$ to $\omega_3$, we can arrange $u_2 = 0$. This yields $S_{11} = 0$. Now the structure group of the coframing contains only the identity element. We get an $e$-structure. The structure equation is
\begin{equation}
\begin{aligned}
d\omega_1 &= \omega_3\wedge\omega_2 + (u_3\omega_3 + u_4\omega_4)\wedge\omega_1\, ,\\
d\omega_2 &\equiv \omega_3\wedge\omega_4\ \ \ \ \ \ \ \mod\ \ \omega_1\, .
\end{aligned}
\end{equation}
\end{proof}
By a Theorem of Kobayashi \cite{MR0088766},
\begin{corollary}
In generic case, the symmetry group of a Lagrangian Engel structure acts freely on the underlying connected manifold.
\end{corollary}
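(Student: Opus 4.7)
The plan is to reduce the corollary to Kobayashi's classical result that the automorphism group of an absolute parallelism on a connected manifold acts freely, by observing that Theorem \ref{th:LG} has produced precisely such a parallelism canonically from the Lagrangian Engel structure.

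First I would unpack what a symmetry is: a diffeomorphism $\varphi : M\to M$ with $\varphi^*\Omega=\Omega$ and $\varphi_*D=D$. Since $I=D^{\perp}$ is recovered from $D$, and the derived system $I^{(1)}$ is built intrinsically from $I$, any such $\varphi$ automatically preserves $I$, $I^{(1)}$, and the genericity condition $p_3\neq 0$. Hence $\varphi$ sends $0$-adapted coframings to $0$-adapted coframings, and more importantly respects each successive reduction of the structure group performed in the proof of Theorem \ref{th:LG}, because each reduction was carried out by invariants of the pair $(\Omega,D)$.

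Next I would invoke the \emph{uniqueness} clause of Theorem \ref{th:LG}: on the domain where $p_3\neq 0$ there is exactly one $0$-adapted coframing $\omega=(\omega_1,\omega_2,\omega_3,\omega_4)$ satisfying the normalized structure equations. Applied to both $\omega$ and $\varphi^*\omega$, which both satisfy the same normalizations, uniqueness forces $\varphi^*\omega_i=\omega_i$ for $i=1,\dots,4$. In other words, every symmetry of the Lagrangian Engel structure preserves the canonical coframing, so the symmetry group embeds into the automorphism group of the $\{e\}$-structure (absolute parallelism) determined by $\omega$.

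Finally I would cite Kobayashi's theorem (\cite{MR0088766}): the group of diffeomorphisms preserving an absolute parallelism on a connected manifold acts freely, because any such diffeomorphism is determined by its $1$-jet at a single point, and preserving the parallelism forces that $1$-jet to be the identity whenever the point is fixed. Applying this to the canonical coframing above yields the corollary. The only real content beyond bookkeeping is the passage from ``symmetry of $(M,\Omega,D)$'' to ``symmetry of $\omega$'', which is immediate from the uniqueness in Theorem \ref{th:LG}; no further computation is needed, and there is no serious obstacle.
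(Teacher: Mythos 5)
Your argument is correct and is exactly the route the paper intends: the paper's ``proof'' is simply the citation of Kobayashi's theorem applied to the canonical coframing ($e$-structure) produced by Theorem \ref{th:LG}, and you have merely spelled out the (correct) intermediate step that symmetries of $(M,\Omega,D)$ must preserve that coframing by its uniqueness.
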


\subsection{Geometry of Lagrangian Engel Structures in Non-Generic Case}

Now we will study the geometry of Lagrangian Engel structures in non-generic case.
Since $p_3 \equiv 0$ and at least one of $p_3$ and $p_4$ is nonzero, then $p_4$ never vanishes.
We can arrange $p_4 = \pm 1$ via dividing $\omega_1$ by $|p_4|$. Then the entry $b_{22}$ is fixed to be $\pm1$. From the expression of the symplectic structure, there is a transformation 
\[\omega_2\rightarrow -\omega_2, \ \ \ \ \ \omega_4 \rightarrow -\omega_4\, .\]
that fixes the symplectic structure and Engel structure. We can fix $b_{22} = 1$ by this transformation.

Now 
\begin{equation}\label{eq:37}
d\omega_1 \equiv \pm\omega_4\wedge \omega_2\ \ \ \ \ \ \ \mod\ \ \omega_1\, .
\end{equation}
$\omega_1$ is uniquely defined by equation (\ref{eq:37}).

By (\ref{eq:37}), there exist functions $A_2, A_3$ and $A_4$ such that
\begin{equation}\label{eq:38}
d\omega_1 = \pm \omega_4\wedge \omega_2 + (A_2\omega_2 + A_3\omega_3 + A_4\omega_4)\wedge\omega_1 \, .
\end{equation}
Under a change of adapted coframing, a new coframing $\tilde{\omega}$ satisfies
\begin{equation}\label{eq:33}
\begin{aligned}
d\tilde\omega_1 &= \pm\tilde\omega_4\wedge\tilde\omega_2 + A_3\tilde\omega_3\wedge\tilde\omega_1 + (\pm b_{21} - b_{21}A_3 + A_4)\tilde\omega_4\wedge\tilde\omega_1\\
&+ \left(\mp S_{12} \pm b_{21}S_{22} + A_2 + A_3(S_{12} - b_{21} S_{22}) + A_4S_{22}\right)\tilde\omega_2\wedge\tilde\omega_1\, .
\end{aligned}
\end{equation}
By comparing (\ref{eq:38}) and (\ref{eq:33}), $A_3$ is an invariant of Lagrangian Engel structures in the case $p_3\equiv 0$. And note that by (\ref{eq:38}),
\begin{equation}\label{eq:60}
d\omega_1\wedge\omega_2\wedge\omega_4 =  -\tfrac{A_3}{2}\,\Omega\wedge\Omega\, .
\end{equation}

If $A_3 \equiv \pm 1$,  (\ref{eq:33}) is equivalent to 
\begin{equation}\label{eq:101}
d\tilde\omega_1 = \pm\tilde\omega_4\wedge\tilde\omega_2 \pm\tilde\omega_3\wedge\tilde\omega_1 +  A_4\tilde\omega_4\wedge\tilde\omega_1+ \left( A_2 + A_4S_{22}\right)\tilde\omega_2\wedge\tilde\omega_1\, .
\end{equation}
Thus by comparing (\ref{eq:38}) and (\ref{eq:101}), $A_4$ is an invariant of Lagrangian Engel structures.

Based on the invariants $A_3$ and $A_4$ in (\ref{eq:38}), we will prove the following theorem: 

\begin{theorem}[Lagrangian Engel Structures in Non-Generic Case]\label{th:LNG}$       $
\begin{enumerate}
\item On the domain where $(A_3 \equiv 0)$ or $(A_3 \neq \pm 1$ and $A_3 \neq 0)$ in (\ref{eq:60}), we get an $e$-structure that whose defining conditions are
\begin{align*}
d\omega_1 &= \pm\omega_4\wedge\omega_2  + A_3\omega_3\wedge\omega_1\, ,\\
d\omega_2 &= (a\omega_1 + b\omega_4)\wedge\omega_2 +  c\omega_3\wedge\omega_1 + \omega_3\wedge\omega_4\, ,
\end{align*}
where $a,b,c$ are functions on $M$.
\item On the domain where $A_3 \equiv \pm 1$ in equation (\ref{eq:60}), there are two cases depending on whether $A_4$ is 0:
\begin{enumerate}
\item On the domain where $A_4 \equiv 0$, the structure equation is
\begin{equation}
\begin{aligned}
d\omega_1 &= \pm\omega_4\wedge\omega_2 \pm\omega_3\wedge\omega_1 + A_2\,\omega_2\wedge\omega_1\, ,\\
d\omega_2 &\equiv \omega_3\wedge\omega_4 \ \ \ \ \ \ \ \mod\ \ \omega_1\, .
\end{aligned}
\end{equation}
\item On the domain where $A_4 \neq 0$, the structure equation is
\begin{equation}
\begin{aligned}
d\omega_1 &= \pm\omega_4\wedge\omega_2 \pm\omega_3\wedge\omega_1 +  A_4\, \omega_4\wedge\omega_1\, ,\\
d\omega_2 &\equiv q_3\omega_3\wedge\omega_2 + \omega_3\wedge\omega_4\, . \ \ \ \ \ \ \ \mod\ \ \omega_1
\end{aligned}
\end{equation}
\end{enumerate}

\end{enumerate}

\end{theorem}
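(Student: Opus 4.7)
My plan is to follow the Cartan equivalence method by performing successive reductions of the structure group $G_1$, driven by the invariants $A_3$ and $A_4$ from (\ref{eq:38}). The starting point is the transformation law (\ref{eq:33}) for $d\omega_1$ together with $d\omega_2 \equiv \omega_3\wedge\omega_4 \mod \omega_1, \omega_2$ inherited from $1$-adaptation; the parameters $b_{11}, b_{22}$ having been pinned down earlier in this subsection, the remaining freedom in $G_1$ is carried by $b_{21}, S_{11}, S_{12}, S_{22}$.

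For Case 1, where $A_3 \neq \pm 1$ (covering both the $A_3 \equiv 0$ sub-case and the nowhere-$\{0, \pm 1\}$ sub-case), I read off from (\ref{eq:33}) that the coefficient of $\tilde\omega_4\wedge\tilde\omega_1$ in $d\tilde\omega_1$ is $b_{21}(\pm 1 - A_3) + A_4$; since $\pm 1 - A_3 \neq 0$ on this domain, I solve uniquely for $b_{21}$ to kill this term. A short calculation shows that, after this choice, the coefficient of $S_{22}$ in the $\tilde\omega_2\wedge\tilde\omega_1$ term collapses to zero, so that coefficient reduces to $S_{12}(A_3 \mp 1) + A_2$, which I then kill by solving for $S_{12}$. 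These two reductions produce the stated normal form for $d\omega_1$. The remaining parameters $S_{11}$ and $S_{22}$ are exactly what is needed to normalize the $\omega_2\wedge\omega_3$ and $\omega_1\wedge\omega_4$ coefficients of $d\omega_2$ to zero; assuming this last step is nondegenerate, the structure group collapses to the identity and one obtains the advertised $e$-structure, with $a, b, c$ the remaining torsion functions.

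For Case 2, where $A_3 \equiv \pm 1$, the coefficient of $b_{21}$ in the $\tilde\omega_4\wedge\tilde\omega_1$ term of (\ref{eq:33}) vanishes identically, which is precisely why (comparing (\ref{eq:38}) with (\ref{eq:101})) $A_4$ is a genuine invariant and $b_{21}$ survives as a free parameter at the $d\omega_1$ level. In sub-case 2(a) with $A_4 \equiv 0$, the $\omega_4\wedge\omega_1$ term is identically absent from $d\omega_1$, and the $A_2$ coefficient cannot be fully absorbed because the $b_{21}$-parameter is no longer tied to the $A_3$-term; I then use $b_{21}$ together with $S_{11}, S_{12}, S_{22}$ to normalize $d\omega_2 \mod \omega_1$ to $\omega_3\wedge\omega_4$. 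In sub-case 2(b) with $A_4 \neq 0$, the $A_4\,\omega_4\wedge\omega_1$ term is retained as an invariant in $d\omega_1$; the parameter $b_{21}$ is then used to force the $\omega_2\wedge\omega_1$ coefficient to zero and to partially normalize $d\omega_2$, which leaves exactly one uneliminable coefficient $q_3$ in the $\omega_3\wedge\omega_2$ slot of $d\omega_2 \mod \omega_1$.

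The main obstacle will be the bookkeeping required to keep the reductions in $d\omega_2$ consistent with those already made in $d\omega_1$; at each step one must verify that the coefficient of the parameter being solved for is nonzero on the relevant domain, and that the total parameter count matches the number of normalizations performed. The dichotomy within Case 2 hinges on whether $b_{21}$ remains genuinely free to help normalize $d\omega_2$ (when $A_4 = 0$) or is instead absorbed by the normalization of $d\omega_1$ (when $A_4 \neq 0$), and tracking this accounting carefully is where the genuine content of the proof lies.
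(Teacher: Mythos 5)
Your proposal follows essentially the same route as the paper: successive Cartan reductions using the residual parameters $b_{21},S_{11},S_{12},S_{22}$, with the case split governed by whether $A_3=\pm1$ and then by $A_4$; your unified treatment of $A_3\equiv0$ and $A_3\neq 0,\pm1$ (needing only $A_3\neq\pm1$), and your observation that the $S_{22}$-dependence of the $\tilde\omega_2\wedge\tilde\omega_1$ coefficient is proportional to the already-normalized $\tilde\omega_4\wedge\tilde\omega_1$ coefficient, are correct and slightly cleaner than the paper's two separate sub-cases. One small slip in sub-case 2(b): when $A_3\equiv\pm1$ the $\tilde\omega_2\wedge\tilde\omega_1$ coefficient in (\ref{eq:33}) reduces to $A_2+A_4S_{22}$ with no $b_{21}$-dependence at all, so it is $S_{22}$ (not $b_{21}$) that absorbs $A_2$ when $A_4\neq0$; $b_{21}$ is instead spent (jointly with $S_{12}$, via $S_{12}+b_{21}q_3\pm b_{21}=0$) on killing the $q_4$ term of $d\omega_2$. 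With that reassignment your accounting matches the paper's and the argument goes through.
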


\begin{proof}

We consider the following 3 sub-cases:
\begin{enumerate}
\item $A_3\equiv 0$ 
\item $A_3 \equiv \pm 1$
\item  $A_3\neq 0$ and $A_3\neq \pm1$ 
 \end{enumerate}

\subsubsection{case $A_3\equiv 0$}
If $A_3\equiv 0$, (\ref{eq:33}) is equivalent to
\begin{equation}\label{eq:34}
d\tilde\omega_1 = \pm\tilde\omega_4\wedge\tilde\omega_2 + (\pm b_{21} + A_4)\tilde\omega_4\wedge\tilde\omega_1+ \left(\mp S_{12} \pm b_{21}S_{22} + A_2 + A_4S_{22}\right)\tilde\omega_2\wedge\tilde\omega_1\, .
\end{equation}

\noindent From the second term of (\ref{eq:34}), by adding a multiple of $\omega_1$ into $\omega_2$, we can arrange $A_4 = 0$. This yields $b_{21} = 0$. Now 
\[B_{11}=  \left[ {\begin{array}{cc}
                                  1  & 0\\
                                   0 & 1\\
                                    \end{array} } \right]= I_2\, .\]
Equation (\ref{eq:34}) is equivalent to
\begin{equation}
d\tilde\omega_1 = \pm\tilde\omega_4\wedge\tilde\omega_2 + \left(\mp S_{12} + A_2 \right)\tilde\omega_2\wedge\tilde\omega_1\, .
\end{equation}
\noindent From the second term of the right side of the above equation, by adding a multiple of $\omega_1$ into $\omega_4$, we can arrange $A_2 = 0$. This yields $S_{12} = 0$. Thus
\begin{equation}
d\omega_1 = \pm\omega_4\wedge\omega_2\, .
\end{equation}
 And
\[B =  \left[ {\begin{array}{cccc}
                                 1 & 0 & 0 & 0\\
                                 0 & 1 & 0 & 0\\
                                  S_{11}  & 0 & 1 & 0\\
                                   0 & S_{22} & 0 & 1\\
                                    \end{array} } \right]\, .\]
By (\ref{eq:35}), there exist functions $q_3$ and $q_4$ such that
\[d\omega_2 \equiv (q_3\omega_3 + q_4\omega_4)\wedge\omega_1 + \omega_3\wedge\omega_4 \ \ \ \ \ \ \ \mod\ \ \omega_2\, . \]
\noindent By adding a multiple of $\omega_1$ into $\omega_3$, we can arrange $q_4 = 0$. This yields $S_{11} = 0$. Thus
\begin{equation}
d\omega_2 \equiv q_3\omega_3\wedge\omega_1 + \omega_3\wedge\omega_4 \ \ \ \ \ \ \ \mod\ \ \omega_2\, .
\end{equation}
There exist functions $r_1, r_3$ and $r_4$ such that
\begin{equation}
d\omega_2 = (r_1\omega_1 + r_3\omega_3 + r_4\omega_4)\wedge\omega_2 +  q_3\omega_3\wedge\omega_1 + \omega_3\wedge\omega_4\, .
\end{equation}
\noindent By adding a multiple of $\omega_2$ into $\omega_4$, we can arrange $r_3 = 0$. This yields $S_{22} = 0$. 

Now the structure group contains only the identity element, i.e., we have found an $e$-structure. In this case, the structure equation is
\begin{equation}
\begin{aligned}
d\omega_1 &= \pm\omega_4\wedge\omega_2\, ,\\
d\omega_2 &= (r_1\omega_1 + r_4\omega_4)\wedge\omega_2 +  q_3\omega_3\wedge\omega_1 + \omega_3\wedge\omega_4\, .
\end{aligned}
\end{equation}

\subsubsection{case $A_3\equiv \pm1$}

(\ref{eq:33}) is equivalent to 
\begin{equation}\label{eq:46}
d\tilde\omega_1 = \pm\tilde\omega_4\wedge\tilde\omega_2 \pm\tilde\omega_3\wedge\tilde\omega_1 +  A_4\tilde\omega_4\wedge\tilde\omega_1+ \left( A_2 + A_4S_{22}\right)\tilde\omega_2\wedge\tilde\omega_1,
\end{equation}
where $A_4$ is an invariant of Lagrangian Engel structures in the case $A_3\equiv \pm1$. 

We will consider the following 2 sub-cases:
\begin{enumerate}
\item $A_4 \equiv 0$
\item $A_4 \neq 0$
\end{enumerate}

\textbf{ case : $A_4 \equiv 0$}
\begin{equation}\label{eq:52}
d\omega_1 = \pm\omega_4\wedge\omega_2 \pm\omega_3\wedge\omega_1 + A_2\omega_2\wedge\omega_1\, .
\end{equation}
Now $A_2$ is an invariant of Lagrangian Engel structures in this case. And
\[B =  \left[ {\begin{array}{cccc}
                                 1 & 0 & 0 & 0\\
                                 b_{21} & 1 & 0 & 0\\
                                  S_{11} - b_{21}S_{12}  & S_{12} - b_{21}S_{22} & 1 & -b_{21}\\
                                   S_{12} & S_{22} & 0 & 1\\
                                    \end{array} } \right]\]
By (\ref{eq:35}), there exist functions $q_3$ and $q_4$ such that
\[d\omega_2 \equiv (q_3\omega_3 + q_4\omega_4)\wedge\omega_2 + \omega_3\wedge\omega_4 \ \ \ \ \ \ \ \mod\ \ \omega_1\, . \]
\noindent By adding a multiple of $\omega_2$ to $\omega_4$, we can arrange $q_3 = 0$. This yields $S_{22} = 0$. By adding a multiple of $\omega_2$ to $\omega_3$, we can arrange $q_4 = 0$. This yields $S_{12}  \pm b_{21}= 0$. And
\begin{equation}\label{eq:53}
d\omega_2 \equiv \omega_3\wedge\omega_4 \ \ \ \ \ \ \ \mod\ \ \omega_1\, .
\end{equation}
There exist functions $r_2, r_3$ and $r_4$ such that
\[d\omega_2 = (r_2\omega_2 + r_3\omega_3 + r_4\omega_4)\wedge\omega_1 + \omega_3\wedge\omega_4\, .\]
The elements of the structure group are of the form
\[B =  \left[ {\begin{array}{cccc}
                                 1 & 0 & 0 & 0\\
                                 b_{21} & 1 & 0 & 0\\
                                  S_{11} \pm b_{21}^2  & \mp b_{21} & 1 & -b_{21}\\
                                   \mp b_{21} & 0 & 0 & 1\\
                                    \end{array} } \right]\, .\]
In this case, the structure group does not reduce to the trivial group. The structure group can be further reduced by considering the derivative of $\omega_3$ and $\omega_4$.

Now $\omega_1$ is uniquely defined by (\ref{eq:52}) and $\omega_2$ is uniquely defined up to an addition of a multiple of $\omega_1$ by (\ref{eq:53}). Thus $\omega_1\wedge\omega_2$ is uniquely defined by (\ref{eq:52}) and (\ref{eq:53}). Therefore,
\begin{equation}\label{eq:54}
\omega_1\wedge\omega_2\wedge d\omega_2 = \omega_1\wedge\omega_2\wedge\omega_3\wedge\omega_4
\end{equation}
is uniquely defined.

\textbf{ case : $A_4\neq 0$}

By (\ref{eq:46}), after adding a multiple of $\omega_2$ into $\omega_4$, we can arrange $A_2 = 0$. This yields $S_{22} = 0$. And
\begin{equation}\label{eq:55}
d\omega_1 = \pm\omega_4\wedge\omega_2 \pm\omega_3\wedge\omega_1 +  A_4\omega_4\wedge\omega_1\, .
\end{equation}

By (\ref{eq:35}), there exist functions $q_3$ and $q_4$ such that
\[d\omega_2 \equiv (q_3\omega_3 + q_4\omega_4)\wedge\omega_2 + \omega_3\wedge\omega_4 \ \ \ \ \ \ \ \mod\ \ \omega_1\, . \]
From this structure equation, $q_3$ is an invariant. By adding a multiple of $\omega_2$ and $\omega_4$ to $\omega_3$, we can arrange $q_4 = 0$. This yields $S_{12} + b_{21}q_3 \pm b_{21} = 0$. 

\begin{equation}\label{eq:56}
d\omega_2 \equiv q_3\omega_3\wedge\omega_2 + \omega_3\wedge\omega_4 \ \ \ \ \ \ \ \mod\ \ \omega_1\, .
\end{equation}

 Now $\omega_1$ is uniquely defined by (\ref{eq:55}) and $\omega_2$ is uniquely defined up to an addition of a multiple of $\omega_1$ by (\ref{eq:56}). Thus $\omega_1\wedge\omega_2$ is uniquely defined by (\ref{eq:55}) and (\ref{eq:56}). Therefore,
\begin{equation}\label{eq:57}
\omega_1\wedge\omega_2\wedge d\omega_2 = \omega_1\wedge\omega_2\wedge\omega_3\wedge\omega_4
\end{equation}
is uniquely defined.

%Recall that there exist functions $p_3$ and $p_4$ such that
%\[d\omega_2 = (p_3\omega_3 + p_4\omega_2)\omega_1 + \omega_3\wedge\omega_4\]
%Via translation of $S_{12}$ and $S_{11}$, respectively, we can fix $p_3 = 0, p_4 = 0$ and 
%$S_{12} = 0$ and $S_{11} = 0$.
%The element of the structure group is reduced into the following form
%\[\left[ {\begin{array}{cccc}
%                                    1 & 0 & 0& 0\\
%                                    0 & 1 & 0& 0\\
%                                    0  & 0 & 1 & 0\\
%                                    0 & S_{22} & 0 & 1
%                                    \end{array} } \right] \
%\]
%where $S_{22}$ can be any function.
%
%Recall that
%\begin{equation}\label{eq:47}
%d\omega_2 \equiv \omega_3\wedge\omega_4\ \ \ \ \ \ \ \mod\ \ \omega_2
%\end{equation}
%Thus there exist functions $s_1, s_3, s_4$ such that
%\begin{equation}\label{eq:48}
%d\omega_2 = (s_1\omega_1 + s_3\omega_3 + s_4\omega_4)\wedge\omega_2 + \omega_3\wedge\omega_4
%\end{equation}
%Via translation of $S_{22}$, we can fix $s_3 = 0$ and $S_{22} = 0$. 
%The structure equation is
%\begin{equation}
%d\omega_2 = (s_1\omega_1 + s_4\omega_4)\wedge\omega_2 + \omega_3\wedge\omega_4
%\end{equation}
%Now the transformation group of the coframing is reduced to the group contains only the identity element. We get an e-structure. 

\subsubsection{case $A_3\neq \pm 1$ and $A_3\neq 0$}
%From equation (\ref{eq:33}), we can fix $A_2 = 0$ by setting $S_{12} = 0$. Then
%\begin{equation}
%d\omega_1 = A_3\omega_3\wedge\omega_1 + \omega_4\wedge\omega_2
%\end{equation}
%In this case, there exist functions $p_3$ and $p_4$ such that
%\[d\omega_2 \equiv (p_3\omega_3 + p_4\omega_2)\omega_1 + \omega_3\wedge\omega_4\ \ \ \ \ \ \ \mod\ \ \omega_2\]
%Via translation of $S_{11}$, we can fix $S_{11} = 0$ and $p_4 = 0$. Thus 
%\[d\omega_2 \equiv p_3\omega_3\wedge \omega_1 + \omega_3\wedge\omega_4\ \ \ \ \ \ \ \mod\ \ \omega_2\]
%There exist $t_1, t_3, t_4$ such that
%\[d\omega_2 = (t_1\omega_1 + t_3\omega_3 + t_4\omega_4)\wedge\omega_2 + p_3\omega_3\wedge \omega_1 + \omega_3\wedge\omega_4\]
%Via translation of $S_{22}$, we can fix $S_{22} = 0$ and $t_3 = 0$. Now the transformation group of the coframing is reduced to the group contains only the identity element. We get an e-structure. 
%\begin{equation}
%\begin{aligned}
%d\omega_1 &= A_3\omega_3\wedge\omega_1 + \omega_4\wedge\omega_2\\
%d\omega_2 &= (t_1\omega_1 + t_4\omega_4)\wedge\omega_2 + p_3\omega_3\wedge \omega_1 + \omega_3\wedge\omega_4
%\end{aligned}
%\end{equation}
By (\ref{eq:33}), after adding a multiple of $\omega_4$ to $\omega_3$, we can arrange $A_4 = 0$. This yields $b_{21} = 0$. Then
\begin{equation}
%\begin{aligned}
d\omega_1 = \pm\omega_4\wedge\omega_2 + A_3\omega_3\wedge\omega_1+ \left(\mp S_{12} + A_2 + A_3S_{12}\right)\omega_2\wedge\omega_1.
%\end{aligned}
\end{equation}
By adding a multiple of $\omega_2$ to $\omega_3$, we can arrange $A_2 = 0$.  This yields $S_{12} = 0$. Then
\begin{equation}
%\begin{aligned}
d\omega_1 = \pm\omega_4\wedge\omega_2 + A_3\omega_3\wedge\omega_1.
%\end{aligned}
\end{equation}
By (\ref{eq:35}), there exist functions $q_3$ and $q_4$ such that
\[d\omega_2 \equiv (q_3\omega_3 + q_4\omega_4)\wedge\omega_1 + \omega_3\wedge\omega_4 \ \ \ \ \ \ \ \mod\ \ \omega_2. \]
\noindent By adding a multiple of $\omega_1$ to $\omega_3$, we can arrange $q_4 = 0$. This yields $S_{11} = 0$. Thus
\begin{equation}
d\omega_2 \equiv q_3\omega_3\wedge\omega_1 + \omega_3\wedge\omega_4 \ \ \ \ \ \ \ \mod\ \ \omega_2.
\end{equation}
There exist functions $r_1, r_3$ and $r_4$ such that
\begin{equation}
d\omega_2 = (r_1\omega_1 + r_3\omega_3 + r_4\omega_4)\wedge\omega_2 +  q_3\omega_3\wedge\omega_1 + \omega_3\wedge\omega_4.
\end{equation}
\noindent By adding a multiple of $\omega_2$ to $\omega_4$, we can arrange $r_3 = 0$. This yields $S_{22} = 0$. 

Now the structure group contains only the identity element, i.e., we have found an $e$-structure. In this case, the structure equation is
\begin{equation}
\begin{aligned}
d\omega_1 &= \pm\omega_4\wedge\omega_2 + A_3\omega_3\wedge\omega_1\, ,\\
d\omega_2 &= (r_1\omega_1 + r_4\omega_4)\wedge\omega_2 +  q_3\omega_3\wedge\omega_1 + \omega_3\wedge\omega_4\, .
\end{aligned}
\end{equation}

\end{proof}

\section{Classification of Homogeneous Lagrangian Engel Structures}
In this section, we derive the structure equation of homogeneous Lagrangian Engel structures via equivalence method \cite{MR1062197}. 
\begin{theorem}[Classification of Homogeneous Lagrangian Engel Structures]\label{th:hle}
There are at most 6 distinct families of homogeneous Lagrangian Engel structures that can \emph{have compact quotient manifolds}. These 6 families are listed as follows:
\begin{enumerate}
\item Case 1: 
 \[d\left[ {\begin{array}{c}
                                   \omega_1 \\
                                   \omega_2\\
                                   \omega_3\\
                                   \omega_4\\
                                    \end{array} } \right] =\left[ {\begin{array}{c}
                                   \omega_2\wedge\omega_3 + a\omega_1\wedge\omega_3 \\
                                 \omega_3\wedge\omega_4\\
                                 0\\
                                b\omega_2\wedge\omega_3\\
                                    \end{array} } \right]\]

\item Case 2:
 \[d\left[ {\begin{array}{c}
                                   \omega_1 \\
                                   \omega_2\\
                                   \omega_3\\
                                   \omega_4\\
                                    \end{array} } \right] =\left[ {\begin{array}{c}
                                   \omega_2\wedge\omega_3 + a\omega_1\wedge\omega_3 + b\omega_1\wedge\omega_4\\
                                b\omega_1\wedge\omega_3 + \omega_3\wedge\omega_4 + b\omega_2\wedge\omega_4\\
                                 0\\
                               0\\
                                    \end{array} } \right]\]

\item Case 3:
 \[d\left[ {\begin{array}{c}
                                   \omega_1 \\
                                   \omega_2\\
                                   \omega_3\\
                                   \omega_4\\
                                    \end{array} } \right] =\left[ {\begin{array}{c}
                                   \omega_2\wedge\omega_3 + a\omega_1\wedge\omega_3 - \frac{1}{4} a^2\omega_1\wedge\omega_4\\
                                -\frac{1}{4}a^2\omega_1\wedge\omega_3 + \omega_3\wedge\omega_4  -\frac{1}{4}a^2\omega_2\wedge\omega_4\\
                                 \frac{1}{2}a^2b(\omega_1\wedge\omega_3 - \omega_2\wedge\omega_4) + ab\omega_2\wedge\omega_3 - \frac{1}{4}a^3b\omega_1\wedge\omega_4\\
                               ab(\omega_1\wedge\omega_3 - \omega_2\wedge\omega_4) + 2b \omega_2\wedge\omega_3 - \frac{1}{2}a^2b\omega_1\wedge\omega_4\\
                                    \end{array} } \right]\]

\item Case 4:
 \[d\left[ {\begin{array}{c}
                                   \omega_1 \\
                                   \omega_2\\
                                   \omega_3\\
                                   \omega_4\\
                                    \end{array} } \right] =\left[ {\begin{array}{c}
                                   \omega_2\wedge\omega_3 + b\omega_1\wedge\omega_3 + a \omega_1\wedge\omega_4\\
                               (a^2 + \frac{ab^2}{4})\omega_1\wedge\omega_2 + a\omega_1\wedge\omega_3 + \omega_3\wedge\omega_4 + a\omega_2\omega_4 \\
                                (2a^3 + \frac{1}{2}a^2b^2)\omega_{12} + \frac{ab^2}{2}\omega_{13} + b\omega_{23} + a^2b\omega_{14} + 2a^2\omega_{24}\\
                               ab(-a - \frac{1}{4})\omega_{12} + ab\omega_{13} + (a - \frac{1}{4})\omega_{23} + (a^2 - \frac{ab^2}{4})\omega_{14} -ab\omega_{24}\\
                                    \end{array} } \right]\]

\item Case 5:
 \[d\left[ {\begin{array}{c}
                                   \omega_1 \\
                                   \omega_2\\
                                   \omega_3\\
                                   \omega_4\\
                                    \end{array} } \right] =\left[ {\begin{array}{c}
                                   \omega_1\wedge\omega_3 + \omega_2\wedge\omega_4\\
                                  a\omega_1\wedge\omega_2 + \omega_3\wedge\omega_4\\
                                  a( \omega_1\wedge\omega_3 + \omega_2\wedge\omega_4)\\
                                -a  \omega_1\wedge\omega_2 - \omega_3\wedge\omega_4\\
                                    \end{array} } \right]\]

\item Case 6:
 \[d\left[ {\begin{array}{c}
                                   \omega_1 \\
                                   \omega_2\\
                                   \omega_3\\
                                   \omega_4\\
                                    \end{array} } \right] =\left[ {\begin{array}{c}
                                   \omega_1\wedge\omega_3 + \omega_2\wedge\omega_4\\
                                  \omega_3\wedge\omega_4\\
                                  0\\
                                 a\omega_2\wedge\omega_3 - \omega_3\wedge\omega_4\\
                                    \end{array} } \right]\]
\end{enumerate}
\noindent where $a$ and $b$ are constants.
\end{theorem}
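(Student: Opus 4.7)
The strategy is to take the $e$-structures produced by Theorem \ref{th:LG} and Theorem \ref{th:LNG} and impose homogeneity: by Kobayashi's theorem, a homogeneous $e$-structure on a connected manifold is determined by its structure equations with \emph{constant} coefficients, so every invariant function appearing in those normal forms must reduce to a constant. Thus the plan is to rewrite each normal form with constant invariants ($a,b,c,e,f$ in the generic case; $A_3, A_4, A_2, q_3, r_i$, etc.\ in the non-generic subcases), complete the system by choosing general expressions for $d\omega_3$ and $d\omega_4$ (which are only determined up to the remaining undetermined torsion terms), and then impose the integrability condition $d^2\omega_i = 0$ for $i = 1,2,3,4$. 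This will yield a finite system of polynomial equations in the constant invariants and in the undetermined coefficients of $d\omega_3$ and $d\omega_4$, whose solutions are candidate homogeneous models.

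First I would handle the generic case of Theorem \ref{th:LG}: setting $a,b,c,e,f$ to be constants and writing $d\omega_3, d\omega_4$ as general constant-coefficient 2-forms in the $\omega_i$, then enforcing $d^2=0$. Next I would work through the three branches of Theorem \ref{th:LNG} separately ($A_3 \equiv 0$, $A_3 \equiv \pm 1$ with $A_4 = 0$ or $A_4 \neq 0$, and $A_3 \neq 0, \pm 1$), applying the same procedure in each branch. In each branch, the Jacobi identities $d^2\omega_i = 0$ will give rise to an overdetermined polynomial system, whose solution components correspond to candidate homogeneous Lie algebras. The resulting Lie algebras should then be identified, and I would compute in each case which are nilpotent and which are solvable.

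To go from \emph{homogeneous} to \emph{admits a compact quotient}, I would invoke the standard fact that a simply connected Lie group admits a co-compact lattice only if it is unimodular, i.e.\ $\tr(\ad X) = 0$ for all $X$ in the Lie algebra. Reading $\tr(\ad)$ off the structure equations is a linear condition on the constants $a, b, c, \ldots$; imposing it will eliminate the non-unimodular components of each solution variety. The surviving families should collapse to the six listed models, with the remaining free parameters being exactly the constants $a, b$ appearing in the statement. I would then exhibit, for each of Cases 1--6, the explicit Lie algebra structure (read directly off the stated $d\omega_i$), verify unimodularity, and note whether it is nilpotent or solvable to confirm that co-compact lattices exist by standard criteria (Mal'cev for nilpotent, and explicit construction via algebraic-integer eigenvalues in the solvable cases).

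The main obstacle will be the bookkeeping in the non-generic branches of Theorem \ref{th:LNG}: after promoting the invariants to constants, the $d^2=0$ system couples the invariants to the many free coefficients in $d\omega_3$ and $d\omega_4$, and the resulting polynomial system has several irreducible components that must be sorted out and matched against the normal forms. A secondary subtlety is checking, once a unimodular Lie algebra is found, that the candidate symplectic form $\Omega = \omega_1 \wedge \omega_3 + \omega_2 \wedge \omega_4$ is genuinely closed and that the 2-plane field annihilated by $\omega_1,\omega_2$ remains Engel after the constants have been fixed; both follow from the adaptation scheme used in Theorems \ref{th:LG} and \ref{th:LNG}, but must be tracked through each case to ensure no spurious family is introduced and none is lost.
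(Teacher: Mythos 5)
Your overall strategy matches the paper's: reduce the $G$-structure to a canonical coframing, promote the surviving torsion functions to constants in the homogeneous case, impose $d^2\omega_i=0$ (the paper does this, together with $d\Omega=0$, and solves the resulting quadratic system by computer algebra), and read off the candidate Lie algebras. Your unimodularity filter is also essentially the device the paper uses, in the equivalent form $d(\omega_i\wedge\omega_j\wedge\omega_k)=c\,\omega_1\wedge\omega_2\wedge\omega_3\wedge\omega_4$ with $c\neq 0$ plus Stokes's theorem; the paper, however, postpones almost all of that filtering to the next section, so the ``6 families'' of this theorem are just the solution components of the quadratic system.

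There is one genuine gap. Your plan rests on Kobayashi's theorem applied to the $e$-structures of Theorems \ref{th:LG} and \ref{th:LNG}, but not every branch of the reduction terminates in an $e$-structure. In the non-generic case with $A_3\equiv\pm 1$ (equivalently, in the paper's Section 3 notation, the branch $a_{13}=1$ and $a_{63}=-2$), the first-order normalizations leave a positive-dimensional residual structure group: the paper says so explicitly (``the structure group does not reduce to the trivial group''), and in Section 3 the form $\pi_2-\pi_5$ is never forced to be basic. On that branch you cannot invoke Kobayashi, you cannot assert that homogeneity makes the torsion constant in a canonical coframing, and the constant-coefficient $d^2=0$ ansatz does not apply as stated, so your enumeration would silently omit it and the claim ``at most 6'' would be unproved. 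The paper disposes of this branch by a different argument: the $3$-form $\omega_1\wedge\omega_2\wedge\omega_4$ is still well defined there, one computes $d(\omega_1\wedge\omega_2\wedge\omega_4)=-2\,\omega_1\wedge\omega_2\wedge\omega_3\wedge\omega_4$, and Stokes's theorem rules out any compact quotient, which is enough because the theorem only counts families admitting compact quotients. You need either this argument or a completion of the reduction (prolongation or second-order normalization) to close the case. A secondary, non-fatal point: writing $d\omega_3,d\omega_4$ with fully general constant coefficients is workable but wasteful; the paper first derives the constrained torsion pattern of equation (\ref{eq:30}) (with built-in relations among the $a_{ij}$ coming from $d\Omega=0$ and the symmetry of $S$), which is what makes the polynomial system tractable.
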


\begin{proof}

\noindent  Since the structure group of Lagrangian Engel structures is of the form (\ref{eq:50}), there exists Lie algebra-valued differential form 
\[\pi = \left[ {\begin{array}{cccc}
                                    \pi_1 & 0 & 0& 0\\
                                    \pi_2 & \pi_3 & 0& 0\\
                                    \pi_4 & \pi_5 & -\pi_1 & -\pi_2\\
                                     \pi_5 & \pi_6 & 0 & -\pi_3
                                    \end{array} } \right]  = (\pi_{ij})\]
such that the structure equation can be written as
\begin{equation}
d\omega_i = \sum \pi_{ij}\wedge\omega_j + \frac{1}{2}\sum\gamma_{ijk}\omega_j\wedge\omega_k\, ,
\end{equation}
where $\gamma_{ijk}$ are torsion terms. By (\ref{eq:23}), there exist functions $a_0, a_3, a_4$ such that
\[d\left[ {\begin{array}{c}
                                   \omega_1 \\
                                   \omega_2\\
                                    \end{array} } \right] =-\left[ {\begin{array}{cccc}
                                    \pi_1 & 0 & 0& 0\\
                                    \pi_2 & \pi_3 & 0& 0\\
                                    \end{array} } \right] \left[ {\begin{array}{c}
                                   \omega_1 \\
                                   \omega_2\\
                                   \omega_3\\
                                   \omega_4\\
                                    \end{array} } \right] + \left[ {\begin{array}{c}
                                   \omega_2\wedge(a_3 \omega_3 + a_4 \omega_4) \\
                                   a_0\omega_3\wedge\omega_4\\
                                    \end{array} } \right]\]
and at least one of $a_3$ and $a_4$ is nonzero and  $a_0 \neq 0$.
                                
By modifying the Lie algebra valued 1-forms $\pi_4, \pi_5, \pi_6$ and absorption of torsions, there exist functions $S_1$ and $S_2$ and 1-form $\tau$ such that                                    
 \[d\left[ {\begin{array}{c}
                                   \omega_3\\
                                   \omega_4\\
                                    \end{array} } \right] =-\left[ {\begin{array}{cccc}
                                    \pi_4 & \pi_5 & -\pi_1 & -\pi_2\\
                                     \pi_5 & \pi_6 & 0 & -\pi_3
                                    \end{array} } \right] \left[ {\begin{array}{c}
                                   \omega_1 \\
                                   \omega_2\\
                                   \omega_3\\
                                   \omega_4\\
                                    \end{array} } \right] + \left[ {\begin{array}{c}
                                  S_1 \omega_3\wedge\omega_4 + \tau\wedge\omega_2\\
                                  S_2 \omega_3\wedge\omega_4 - \tau\wedge\omega_1\\
                                    \end{array} } \right]\, ,\]
where $\tau = t_3\omega_3 + t_4\omega_4$ for some functions $t_3$ and $t_4$ . Thus after this absorption of torsions, we get $0$-adapted coframing such that
\begin{equation}\label{eq:61}
d\left[ {\begin{array}{c}
                                   \omega_1 \\
                                   \omega_2\\
                                   \omega_3\\
                                   \omega_4\\
                                    \end{array} } \right] =-\left[ {\begin{array}{cccc}
                                    \pi_1 & 0 & 0& 0\\
                                    \pi_2 & \pi_3 & 0& 0\\
                                    \pi_4 & \pi_5 & -\pi_1 & -\pi_2\\
                                     \pi_5 & \pi_6 & 0 & -\pi_3
                                    \end{array} } \right] \left[ {\begin{array}{c}
                                   \omega_1 \\
                                   \omega_2\\
                                   \omega_3\\
                                   \omega_4\\
                                    \end{array} } \right] + \left[ {\begin{array}{c}
                                   \omega_2\wedge(a_3 \omega_3 + a_4 \omega_4) \\
                                   a_0\omega_3\wedge\omega_4\\
                                  S_1 \omega_3\wedge\omega_4 + \tau\wedge\omega_2\\
                                  S_2 \omega_3\wedge\omega_4 - \tau\wedge\omega_1\\
                                    \end{array} } \right].
                                    \end{equation}                                
                                    
Since $\Omega$ is a symplectic form, $d\Omega = 0$. By (\ref{eq:61}) and $d\Omega = 0$, we get $S_2 = - a_4,\, S_1 = 0,\, t_3 = 0,\, t_4 = 0$. Now (\ref{eq:61}) is transformed into
 \begin{equation}\label{eq:62}
 d\left[ {\begin{array}{c}
                                   \omega_1 \\
                                   \omega_2\\
                                   \omega_3\\
                                   \omega_4\\
                                    \end{array} } \right] =-\left[ {\begin{array}{cccc}
                                    \pi_1 & 0 & 0& 0\\
                                    \pi_2 & \pi_3 & 0& 0\\
                                    \pi_4 & \pi_5 & -\pi_1 & -\pi_2\\
                                     \pi_5 & \pi_6 & 0 & -\pi_3
                                    \end{array} } \right] \left[ {\begin{array}{c}
                                   \omega_1 \\
                                   \omega_2\\
                                   \omega_3\\
                                   \omega_4\\
                                    \end{array} } \right] + \left[ {\begin{array}{c}
                                   \omega_2\wedge(a_3 \omega_3 + a_4 \omega_4) \\
                                   a_0\omega_3\wedge\omega_4\\
                                  0\\
                                  -a_4 \omega_3\wedge\omega_4\\
                                    \end{array} } \right].
                                    \end{equation}

\noindent Now we calculate the reduction of the group using the equivalence method. Calculate $d^2\omega_1 = 0$ by (\ref{eq:62})
\begin{align}\label{eq:24}
da_4 + a_3\pi_2 + a_4 \pi_1 &\equiv 0\ \ \ \ \ \ \ \ \   \mod\ \ \omega_1, \omega_2, \omega_3,\omega_4\, ,\nonumber\\
da_3 + a_3(2\pi_1 - \pi_3) & \equiv 0\ \ \ \ \ \ \ \ \   \mod\ \ \omega_1, \omega_2, \omega_3,\omega_4\, .
\end{align}
Since at least one of $a_3$ and $a_4$ is nonzero, there are two cases: $a_3 \neq 0$ or $a_3 = 0$.

\subsection{case $a_3 \neq 0$}
We can scale $a_3 = 1$ and translate $a_4 = 0$. Then from (\ref{eq:24})
\begin{align}\label{eq:63}
\pi_2 &\equiv 0\ \ \ \ \ \ \ \ \   \mod\ \ \omega_1, \omega_2, \omega_3,\omega_4\, ,\nonumber\\
2\pi_1 - \pi_3 & \equiv 0\ \ \ \ \ \ \ \ \   \mod\ \ \omega_1, \omega_2, \omega_3,\omega_4
\end{align}
This means $\pi_2,\,  2\pi_1 - \pi_3$ are basic.

\noindent Calculating $d^2\omega_2 = 0$ from (\ref{eq:62}) yields
\[da_0 + a_0(\pi_1 + 2 \pi_3)  \equiv 0\ \ \ \ \ \ \ \ \   \mod\ \ \omega_1, \omega_2, \omega_3,\omega_4\, .\]
Since $a_0 \neq 0$, we can scale $a_0 = 1$. Then
\begin{equation}\label{eq:64}
\pi_1 +2 \pi_3  \equiv 0\ \ \ \ \ \ \ \ \   \mod\ \ \omega_1, \omega_2, \omega_3,\omega_4\, .
\end{equation}
Thus from (\ref{eq:63}) and (\ref{eq:64}), we have
\[\pi_1 \equiv\pi_2\equiv  \pi_3  \equiv 0\ \ \ \ \ \ \ \ \   \mod\ \ \omega_1, \omega_2, \omega_3,\omega_4\, .\]
Define $\pi_i = \sum_{j=1}^{4}a_{ij}\omega_j$, where $i = 1,2,3$. Calculate $d^2\omega_4 = 0$ from (\ref{eq:62}), then
\[da_{33} + \pi_6  \equiv 0\ \ \ \ \ \ \ \ \   \mod\ \ \omega_1, \omega_2, \omega_3,\omega_4\, .\]
We can translate $a_{33} = 0$. Then
\[ \pi_6  \equiv 0\ \ \ \ \ \ \ \ \   \mod\ \ \omega_1, \omega_2, \omega_3,\omega_4\, .\]

\noindent Calculate $d^2\omega_3 = 0$ from (\ref{eq:62}), then
\[d(a_{23} - a_{14}) + \pi_5  \equiv 0\ \ \ \ \ \ \ \ \   \mod\ \ \omega_1, \omega_2, \omega_3,\omega_4\, .\]
We can translate $a_{23} - a_{14} = 0$. Then
\[ \pi_5  \equiv 0\ \ \ \ \ \ \ \ \   \mod\ \ \omega_1, \omega_2, \omega_3,\omega_4\, .\]

\noindent Calculate $d^2\omega_1 = 0$ from (\ref{eq:62}), then
\[a_{34} = a_{14}\]
and
\[d a_{12} + \pi_4  \equiv 0\ \ \ \ \ \ \ \ \   \mod\ \ \omega_1, \omega_2, \omega_3,\omega_4\, .\]
We can translate $a_{12} = 0$. Then
\[ \pi_4  \equiv 0\ \ \ \ \ \ \ \ \   \mod\ \ \omega_1, \omega_2, \omega_3,\omega_4\, .\]
Thus
\[\pi_4 \equiv\pi_5\equiv  \pi_6  \equiv 0\ \ \ \ \ \ \ \ \   \mod\ \ \omega_1, \omega_2, \omega_3,\omega_4\, .\]

Now we get a canonical coframing and the $G$-structure is reduced to an $e$-structure. The structure equation is
 \begin{equation}\label{eq:30}
 d\left[ {\begin{array}{c}
                                   \omega_1 \\
                                   \omega_2\\
                                   \omega_3\\
                                   \omega_4\\
                                    \end{array} } \right] = \left[ {\begin{array}{cccccc}     
                                    0 & a_{13} & a_{14} & 1 & 0& 0 \\
                                    a_{22} - a_{31} & a_{14} & a_{24} & 0& a_{14} & 1\\
                                    a_{42} - a_{51} & a_{11} + a_{43} & a_{44} + a_{21} & a_{53} & a_{54} + a_{22} & 0\\
                                    a_{52} - a_{61} & a_{53} & a_{54} + a_{31} & a_{63} & a_{64} + a_{32} & 0
                                     \end{array} } \right]                                                   
                                     \left[ {\begin{array}{c}
                                    \omega_1\wedge\omega_2\\
                                     \omega_1\wedge\omega_3\\
                                     \omega_1\wedge\omega_4\\
                                     \omega_2\wedge\omega_3\\
                                     \omega_2\wedge\omega_4\\
                                     \omega_3\wedge\omega_4        \end{array} } \right],
\end{equation}
where the nonzero terms of the right side of (\ref{eq:30}) represent intrinsic torsion of Lagrangian Engel structures. The coefficients of torsion terms are functional invariants of Lagrangian Engel structures. We have finished the analysis of the structure equation for the case $a_3 \neq 0$. 

\subsection{case $a_3 = 0$}
We can scale $a_4 = 1$.
Thus 
\[\pi_1 \equiv 0\ \ \ \ \ \ \ \ \   \mod\ \ \omega_1, \omega_2, \omega_3,\omega_4\, .\]

\noindent Calculate $d^2\omega_2 = 0$ from equation (\ref{eq:62}), then
\[da_0 + 2a_0 \pi_3  \equiv 0\ \ \ \ \ \ \ \ \   \mod\ \ \omega_1, \omega_2, \omega_3,\omega_4\, .\]
Since $a_0 \neq 0$, we can scale $a_0 = 1$. Then
\[ \pi_3  \equiv 0\ \ \ \ \ \ \ \ \   \mod\ \ \omega_1, \omega_2, \omega_3,\omega_4\, .\]

\noindent Calculate $d^2\omega_4 = 0$ from (\ref{eq:62}), then
\[da_{33} + \pi_6  \equiv 0\ \ \ \ \ \ \ \ \   \mod\ \ \omega_1, \omega_2, \omega_3,\omega_4\, .\]
We can translate $a_{33} = 0$. Then
\[ \pi_6  \equiv 0\ \ \ \ \ \ \ \ \   \mod\ \ \omega_1, \omega_2, \omega_3,\omega_4\, .\]

Also from $d^2\omega_4 = 0$,
\[d(a_{32} + a_{64}) + 2\pi_5 + a_{63}\pi_2  \equiv 0\ \ \ \ \ \ \ \ \   \mod\ \ \omega_1, \omega_2, \omega_3,\omega_4\, .\]
We can translate $a_{32} + a_{64}= 0$. Then
\[2\pi_5 + a_{63}\pi_2  \equiv 0\ \ \ \ \ \ \ \ \   \mod\ \ \omega_1, \omega_2, \omega_3,\omega_4\, .\]

\noindent Calculate $d^2\omega_2 = 0$ from (\ref{eq:62}), then
\[da_{34}  - \pi_5 + \pi_2  \equiv 0\ \ \ \ \ \ \ \ \   \mod\ \ \omega_1, \omega_2, \omega_3,\omega_4\, .\]
There are 2 cases: $a_{63} \neq -2$ or $a_{63} \equiv -2$.

\begin{enumerate}
\item case 1. $a_{63} \neq -2$
We can translate $a_{34} = 0$. Then
\[\pi_5 \equiv \pi_2  \equiv 0\ \ \ \ \ \ \ \ \   \mod\ \ \omega_1, \omega_2, \omega_3,\omega_4\, .\]
\item case 2. $a_{63} = -2$

\noindent From $d^2\omega_1 = 0$,
\[da_{14}  + (a_{13} - 1)\pi_2  \equiv 0\ \ \ \ \ \ \ \ \   \mod\ \ \omega_1, \omega_2, \omega_3,\omega_4\]
and
\[da_{12}  + (-a_{13} + 1)\pi_5  \equiv 0\ \ \ \ \ \ \ \ \   \mod\ \ \omega_1, \omega_2, \omega_3,\omega_4\, .\]
\end{enumerate}

In summary, as long as $a_{13} \neq 1$ or $a_{63} \neq -2$, 
\[\pi_5 \equiv \pi_2  \equiv 0\ \ \ \ \ \ \ \ \   \mod\ \ \omega_1, \omega_2, \omega_3,\omega_4\, .\]

\subsubsection{ $a_{13} \neq 1$ or $a_{63} \neq -2$}

\noindent From $d^2\omega_2 = 0$,
\[da_{24}  -\pi_4  \equiv 0\ \ \ \ \ \ \ \ \   \mod\ \ \omega_1, \omega_2, \omega_3,\omega_4\, .\]
We can translate $a_{24} = 0$. Then
\[\pi_4  \equiv 0\ \ \ \ \ \ \ \ \   \mod\ \ \omega_1, \omega_2, \omega_3,\omega_4\, .\]
So
\[\pi_1  \equiv  \pi_2  \equiv\pi_3  \equiv\pi_4  \equiv\pi_5  \equiv\pi_6  \equiv0\ \ \ \ \ \ \ \ \   \mod\ \ \omega_1, \omega_2, \omega_3,\omega_4\, .\]

Now we get an $e$-structure and a canonical coframing. In this case there are 2 different families of structure equations. 

%The structure equation is as follows:
%\paragraph{case 5}
% \[d\left[ {\begin{array}{c}
%                                   \omega_1 \\
%                                   \omega_2\\
%                                   \omega_3\\
%                                   \omega_4\\
%                                    \end{array} } \right] =\left[ {\begin{array}{c}
%                                   \omega_1\wedge\omega_3 + \omega_2\wedge\omega_4\\
%                                  a\omega_1\wedge\omega_2 + \omega_3\wedge\omega_4\\
%                                  a( \omega_1\wedge\omega_3 + \omega_2\wedge\omega_4)\\
%                                -a  \omega_1\wedge\omega_2 - \omega_3\wedge\omega_4\\
%                                    \end{array} } \right]\, ,\]
%where $a$ is a real function.
%
%\paragraph{case 6}
% \[d\left[ {\begin{array}{c}
%                                   \omega_1 \\
%                                   \omega_2\\
%                                   \omega_3\\
%                                   \omega_4\\
%                                    \end{array} } \right] =\left[ {\begin{array}{c}
%                                   \omega_1\wedge\omega_3 + \omega_2\wedge\omega_4\\
%                                  \omega_3\wedge\omega_4\\
%                                  0\\
%                                 a\omega_2\wedge\omega_3 - \omega_3\wedge\omega_4\\
%                                    \end{array} } \right]\, ,\]
%where $a$ is a real function.

\subsubsection{$a_{13} =1$ and $a_{63} = -2$}

In this case, we know that 
\[\pi_1  \equiv \pi_3  \equiv\pi_6  \equiv \pi_2 - \pi_5 \equiv 0\ \ \ \ \ \ \ \ \   \mod\ \ \omega_1, \omega_2, \omega_3,\omega_4\, .\]
From $d^2\omega_1 = 0$,
\[da_{12}   \equiv 0\ \ \ \ \ \ \ \ \   \mod\ \ \omega_1, \omega_2, \omega_3,\omega_4\]
and
\[da_{14}   \equiv 0\ \ \ \ \ \ \ \ \   \mod\ \ \omega_1, \omega_2, \omega_3,\omega_4\, .\]

I do not intend to finish the calculation of all invariants of Lagrangian Engel structures of this case. Since the goal is to classify compact quotients that support homogeneous Lagrangian Engel structures, I will prove that no compact quotients can support a homogeneous Lagrangian Engel structure of this case.

From the structure equation,
\[d(\omega_1\wedge\omega_2\wedge\omega_4) = -2\ \omega_1\wedge\omega_2\wedge\omega_3\wedge\omega_4\, .\]
By Stokes's Theorem, there is no compact quotient that supports a homogeneous Lagrangian Engel structure when $a_{13} =1$ and $a_{63} = -2$. In the following classification of compact homogeneous Lagrangian Engel structures, we will not consider this case any more.

Now we will classify homogeneous Lagrangian Engel structures. Assume all the coefficients in the structure equations are constants. By taking exterior derivative of the structure equation and setting all coefficients to zero, we can get quadratic equations of the constants. Via MAPLE, we can solve all the equations. The structure equations of homogeneous Lagrangian Engel structures are listed in the statement of the theorem.

For the case that $a_{13} =1$ and $a_{63} = -2$, it remains to determine whether there exist homogeneous Lagrangian Engel structures.
\end{proof}

\section{Classification of Compact Homogeneous Lagrangian Engel Structures}
 \begin{theorem}[Classification of Compact Homogeneous Lagrangian Engel Structures]
There is only a 1-parameter family of compact homogeneous Lagrangian Engel structures. There exists a canonical coframing $(\omega_1, \omega_2, \omega_3, \omega_4)$ such that
 \[d\left[ {\begin{array}{c}
                                   \omega_1 \\
                                   \omega_2\\
                                   \omega_3\\
                                   \omega_4\\
                                    \end{array} } \right] =\left[ {\begin{array}{c}
                                   \omega_2\wedge\omega_3 \\
                                 \omega_3\wedge\omega_4\\
                                 0\\
                                b\omega_2\wedge\omega_3\\
                                    \end{array} } \right]\]
                               
\noindent where $b\in\mathbb{R}$ is a constant.
\end{theorem}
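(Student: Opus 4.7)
The plan is to begin with the six homogeneous families supplied by Theorem~\ref{th:hle} and to eliminate, using Stokes's theorem, each family, or each open set of parameter values within a family, that cannot descend to a compact quotient. The underlying observation is that on a compact oriented $4$-manifold the integral of any exact $4$-form vanishes, so whenever the structure equations force some left-invariant $3$-form $\eta$ to satisfy $d\eta = c\,\omega_1\wedge\omega_2\wedge\omega_3\wedge\omega_4$ for a nonzero constant $c$, no compact quotient of that family can exist.

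My first step is to compute $d(\omega_1\wedge\omega_2\wedge\omega_4)$ in each of the six families. Using
\[
d(\alpha\wedge\beta\wedge\gamma)=d\alpha\wedge\beta\wedge\gamma-\alpha\wedge d\beta\wedge\gamma+\alpha\wedge\beta\wedge d\gamma
\]
and discarding every monomial with a repeated factor, I expect to find $d(\omega_1\wedge\omega_2\wedge\omega_4)=-a\,\omega_1\wedge\omega_2\wedge\omega_3\wedge\omega_4$ in Cases~1, 2 and~3, forcing the outer parameter $a$ to vanish; $d(\omega_1\wedge\omega_2\wedge\omega_4)=-b\,\omega_1\wedge\omega_2\wedge\omega_3\wedge\omega_4$ in Case~4, forcing $b=0$; and $d(\omega_1\wedge\omega_2\wedge\omega_4)=-2\,\omega_1\wedge\omega_2\wedge\omega_3\wedge\omega_4$ in Cases~5 and~6, independently of the parameter, so that those two families are eliminated outright.

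Next I would follow each partial vanishing with a second Stokes computation. In Case~2 with $a=0$, the identity $d(\omega_1\wedge\omega_2\wedge\omega_3)=2b\,\omega_1\wedge\omega_2\wedge\omega_3\wedge\omega_4$ forces $b=0$, collapsing Case~2 onto Case~1 with $a=b=0$. In Case~3 with $a=0$ the structure equations degenerate directly to those of Case~1 with $a=0$, the parameter $b$ being rescaled by a factor of~$2$. In Case~4 with $b=0$, the analogous calculation $d(\omega_2\wedge\omega_3\wedge\omega_4)=2a^2\,\omega_1\wedge\omega_2\wedge\omega_3\wedge\omega_4$ forces $a=0$, after which Case~4 becomes Case~1 with $a=0$ and $b=-\tfrac{1}{4}$. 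Thus the only homogeneous Lagrangian Engel structure that can descend to a compact quotient is Case~1 with $a=0$, which is precisely the $1$-parameter family displayed in the statement.

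The principal technical obstacle is the Case~4 arithmetic, whose structure equations contain several cross-terms of the form $a^kb^\ell$ that might on casual inspection appear to contribute to the volume form. To keep the calculation clean I will track only the coefficient of $\omega_1\wedge\omega_2\wedge\omega_3\wedge\omega_4$ in each wedge product, discarding at sight any monomial whose factors repeat. Finally, to see that the resulting $1$-parameter family is nonempty, I will note that for $b=0$ the structure equations describe a $3$-step nilpotent Lie algebra whose simply connected Lie group admits co-compact lattices by Malcev's theorem, thereby furnishing an explicit compact homogeneous representative.
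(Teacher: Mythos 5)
Your elimination half is sound and essentially reproduces the paper's argument: in each of the six families of Theorem \ref{th:hle} you integrate an exact invariant $4$-form over a putative compact quotient, and your specific identities (the coefficient $-a$ of $\omega_1\wedge\omega_2\wedge\omega_3\wedge\omega_4$ in $d(\omega_1\wedge\omega_2\wedge\omega_4)$ for Cases 1--3, the coefficient $-b$ in Case 4, the constant $-2$ in Cases 5 and 6, then $d(\omega_1\wedge\omega_2\wedge\omega_3)=2b\,\omega_1\wedge\omega_2\wedge\omega_3\wedge\omega_4$ in Case 2 and $d(\omega_2\wedge\omega_3\wedge\omega_4)=2a^2\,\omega_1\wedge\omega_2\wedge\omega_3\wedge\omega_4$ in Case 4 with $b=0$) all check out; the paper uses slightly different $3$-forms in some cases but the outcome is the same, namely that every compact example must lie in Case 1 with $a=0$.

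The genuine gap is in the existence half. The theorem asserts a full $1$-parameter family of \emph{compact} homogeneous structures, i.e.\ that for every value of $b$ the structure equations $d\omega_1=\omega_2\wedge\omega_3$, $d\omega_2=\omega_3\wedge\omega_4$, $d\omega_3=0$, $d\omega_4=b\,\omega_2\wedge\omega_3$ are realized on a compact quotient; showing the family is merely ``nonempty'' via the $b=0$ member does not prove this. Moreover, your appeal to Malcev's criterion only works for $b=0$: for $b\neq 0$ the Lie algebra is not nilpotent (the derived algebra is spanned by $e_2$ and $e_1+b\,e_4$, on which $\mathrm{ad}(e_3)$ acts invertibly), so the rational-structure-constants criterion does not apply, and lattice existence in solvable non-nilpotent groups is exactly the delicate point. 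The paper devotes most of its proof to this: for $b<0$ the group is isomorphic to $\mathbb{R}\times \mathrm{Sol}^3$ and a co-compact lattice is built from a hyperbolic element of $SL_2(\mathbb{Z})$ acting on a suitably chosen plane lattice (equivalently one cites Bock's classification of low-dimensional solvmanifolds), while for $b>0$ the relevant algebra is that of $\mathbb{R}\times\widetilde{E}(2)$ and again a lattice is produced by identifying the algebra in Bock's list. Without an argument of this kind for $b\neq 0$ your proof only shows that compact homogeneous Lagrangian Engel structures are \emph{contained} in the stated family, not that the whole $1$-parameter family occurs, which is the substantive content of the theorem.
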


\begin{proof}

We will prove this theorem by analyzing each homogeneous case in Theorem \ref{th:hle} and determining whether there exists a compact quotient that can support the corresponding homogeneous Lagrangian Engel structure of one particular case.
\subsection{Analysis of Case 1}
The structure equation is
 \begin{equation}\label{eq:67}
 d\left[ {\begin{array}{c}
                                   \omega_1 \\
                                   \omega_2\\
                                   \omega_3\\
                                   \omega_4\\
                                    \end{array} } \right] =\left[ {\begin{array}{c}
                                   \omega_2\wedge\omega_3 + a\omega_1\wedge\omega_3 \\
                                 \omega_3\wedge\omega_4\\
                                 0\\
                                b\omega_2\wedge\omega_3\\
                                    \end{array} } \right]
\end{equation}
where $a$ and $b$ are constants.
From the structure equation (\ref{eq:67}),
\[d(\omega_1\wedge\omega_2\wedge\omega_4) = -a\ \omega_1\wedge\omega_2\wedge\omega_3\wedge\omega_4\, .\]
Thus if $a\neq 0$, there is no compact quotient that can support a homogeneous Lagrangian Engel structure of case 1.

In the following, we only consider $a = 0$.
Since $d\omega_3 = 0$ and $d(\omega_4 - b\omega_1) = 0$, there exist functions $x$ and $y$ such that
\begin{align*}
\omega_3 &= dy,\\
\omega_4 - b\omega_1 &= dx.
\end{align*}
Define $\tilde\omega_2 = \omega_2 + xdy$. Then
 \begin{equation}\label{eq:102}
 d\left[ {\begin{array}{c}
                                   \omega_1 \\
                                   \tilde\omega_2\\
                                    \end{array} } \right] =\left[ {\begin{array}{c}
                                   \tilde\omega_2\wedge dy \\
                                 b\ dy\wedge\omega_1\\
                                    \end{array} } \right].
\end{equation}
\begin{proposition}
If $a = 0$, there exists a compact quotient that supports a homogeneous Lagrangian Engel structure of Case1 for any $b$.
\end{proposition}    
 We prove this proposition by considering different values for $b$.                               
\subsubsection{$b = 0$}

 \[d\left[ {\begin{array}{c}
                                   \omega_1 \\
                                   \omega_2\\
                                    \end{array} } \right] =\left[ {\begin{array}{c}
                                  \omega_2\wedge dy \\
                                dy\wedge dx\\
                                    \end{array} } \right].\]
Thus there exist functions $u$ and $v$ such that
\begin{align*}
\omega_1 &= udy + dv\, ,\\
\omega_2 &= -xdy + du\, .
\end{align*}
Since 
\[\omega_1\wedge\omega_2\wedge\omega_3\wedge\omega_4 = dv\wedge du\wedge dy\wedge dx\, ,\]
so $x,y,u,v$ can be a local coordinate system for the homogeneous manifold.

Let 
\begin{equation}\label{eq:74}
\omega = \left[ {\begin{array}{cccc}
                                  0 & \omega_3 & -\omega_2 & 2\omega_1 \\
                                  0 & 0 & \omega_4 & \omega_2\\
                                  0 & 0 & 0 &\omega_3\\
                                  0 & 0 & 0 & 0\\
                                    \end{array} } \right]
\end{equation}
be a matrix-valued 1-form. Then from (\ref{eq:67}), we have
\begin{equation}\label{eq:75}
d\omega = -\omega\wedge\omega
\end{equation}
Thus $\omega$ is a left-invariant form of a Lie group $G$. The connected and simply-connected Lie group corresponding to the left-invariant form in (\ref{eq:75}) is isomorphic to 
\begin{equation}
G = \left\{
\left.\left[ {\begin{array}{cccc}
                                  1 & f & fe - c & d \\
                                  0 & 1 & 2e & fe + c \\
                                  0 & 0 & 1 & f\\
                                  0 & 0 & 0 & 1\\
                                    \end{array} } \right]  \right\vert \text{ where } f, e, c, d\in \mathbb{R}
\right\}
\end{equation}

Note $G$ is a nilpotent Lie group. In \cite{MR0507234}, there is a theorem:
\begin{theorem}\label{th:1}
A simply-connected nilpotent Lie group $G$ admits a lattice if and only if there exists a basis $(X_1, X_2,\cdots, X_n)$ of the Lie algebra $\mathfrak{g}$ of $G$ such that the structure constants $C^k_{ij}$ arising in the brackets
\begin{equation}\label{eq:76}
[X_i, X_j] = \sum_k C^k_{ij}X_k
\end{equation}
are rational numbers.
\end{theorem}
By the structure (\ref{eq:75}) and Theorem \ref{th:1}, there exists a co-compact lattice for the group $G$, and thus there exists a compact quotient that can support a homogeneous Lagrangian Engel structure. We will find an explicit co-compact lattice in this case. Take a discrete subgroup of Lie group $G$
\begin{equation}
\Gamma = \left\{
\left.\left[ {\begin{array}{cccc}
                                  1 & f & fe - c & d \\
                                  0 & 1 & 2e & fe + c \\
                                  0 & 0 & 1 & f\\
                                  0 & 0 & 0 & 1\\
                                    \end{array} } \right] \right\vert \text{ where } c, d, e, f\in \mathbb{Z}\right\} .
\end{equation}
It is easy to verify that $\Gamma$ is a subgroup of $G$ and that $M = G/\Gamma$
is compact. So if $b = 0$, there exists a compact quotient, that supports a homogeneous Lagrangian Engel structure.

\subsubsection{$b < 0$}
Set $b = -\beta^2$, where $\beta > 0$. Then by (\ref{eq:102}), we get
\[d(\beta\omega_1 + \tilde\omega_2) = \beta (\beta \omega_1 + \tilde\omega_2)\wedge dy\]
and
\[d(-\beta \omega_1 + \tilde\omega_2) = -\beta (-\beta \omega_1 + \tilde\omega_2)\wedge dy\, .\]
So there exist functions $u$ and $v$ such that $\beta \omega_1 + \tilde\omega_2 = e^{-\beta y}du $ and $-\beta \omega_1 + \tilde\omega_2 = e^{\beta y}dv$. Thus 
\[\tilde\omega_2 = \frac{e^{-\beta y}du + e^{\beta y}dv}{2}\]
and
\[\omega_1 = \frac{e^{-\beta y}du - e^{\beta y}dv}{2\beta}.\]

%Hence, we know that the Lie group with structure equation of Case 1 is solvable.
%Take the following discrete subgroup of the symmetric group
%\begin{equation}
%\begin{aligned}
%\Gamma &= \{ (x_0, y_0, u_0, v_0) | (x,y,u,v) \rightarrow (x + x_0, y + y_0,  e^{\sqrt{-b} y_0}u + u_0, e^{-\sqrt{-b} y_0}v + v_0),\\
%  &\text{where } x_0, y_0, u_0, v_0\in \mathbb{Z} \}
%\end{aligned}
%\end{equation}
%Since $e^{\sqrt{-b} y_0}$ is not a rational number for a nonzero number $y_0$, so the quotient 
%\[\bigslant{\mathbb{R}^4}{\Gamma} \]
%cannot be a compact manifold.

Now we take a new coframing. After scaling $\omega_4\rightarrow \beta\omega_4$,  $\omega_3\rightarrow \frac{1}{\beta}\omega_3$ and  $\omega_1\rightarrow \frac{1}{\beta}\omega_1$, then the structure equation is transformed to
 \begin{equation}
 d\left[ {\begin{array}{c}
                                   \omega_1 \\
                                   \omega_2\\
                                   \omega_3\\
                                   \omega_4\\
                                    \end{array} } \right] =\left[ {\begin{array}{c}
                                   \omega_2\wedge\omega_3  \\
                                 \omega_3\wedge\omega_4\\
                                 0\\
                                -\omega_2\wedge\omega_3\\
                                    \end{array} } \right].
\end{equation}
Define $\omega_0 = \omega_1 + \omega_4$, $\tilde\omega_2 = \omega_2 + \omega_4$ and $\tilde\omega_4 = \omega_2 - \omega_4$. Then $(\omega_0, \tilde\omega_2, \omega_3, \tilde\omega_4)$ is a new coframing. In this new coframing, after dropping tildes, the structure equation is
% \begin{equation}
% d\left[ {\begin{array}{c}
%                                   \omega_0 \\
%                                   \tilde\omega_2\\
%                                   \omega_3\\
%                                   \tilde\omega_4\\
%                                    \end{array} } \right] =\left[ {\begin{array}{c}
%                                  0\\
%                                 \omega_3\wedge\tilde\omega_2\\
%                                 0\\
%                                -\omega_3\wedge\tilde\omega_4\\
%                                    \end{array} } \right].
%\end{equation}
 \begin{equation}\label{eq:71}
 d\left[ {\begin{array}{c}
                                   \omega_0 \\
                                   \omega_2\\
                                   \omega_3\\
                                   \omega_4\\
                                    \end{array} } \right] =\left[ {\begin{array}{c}
                                  0\\
                                 \omega_3\wedge\omega_2\\
                                 0\\
                                -\omega_3\wedge\omega_4\\
                                    \end{array} } \right].
\end{equation}

Let 
\begin{equation}\label{eq:69}
\omega = \left[ {\begin{array}{cccc}
                                  \omega_0 & 0 & 0 & 0 \\
                                  0 & -\omega_3 & 0 & \omega_2\\
                                  0 & 0 & \omega_3 &\omega_4\\
                                  0 & 0 & 0 & 0\\
                                    \end{array} } \right].
\end{equation}
be a matrix-valued 1-form. Then from (\ref{eq:71})
\begin{equation}
d\omega = -\omega\wedge\omega.
\end{equation}
Thus $\omega$ is a Maurer-Cartan form of a Lie group $G$. The connected and simply-connected Lie group corresponding to the Maurer-Cartan form in (\ref{eq:69}) is isomorphic to 
\begin{equation}
G = \left\{
\left.\left[ {\begin{array}{cccc}
                                  c & 0 & 0 & 0 \\
                                  0 & t^{-1} & 0 &  r \\
                                  0 & 0 & t & s\\
                                  0 & 0 & 0 & 1\\
                                    \end{array} } \right] \right\vert \text{ where }  r, s\in \mathbb{R}
 \text{ and } c > 0 \text{ and } t > 0\right\}.
\end{equation}
\begin{theorem}
There exists a co-compact lattice of $G$.
\end{theorem}
\begin{proof}
Let $(X_1, X_2, X_3, X_4)$ be the left-invariant vectors dual to the left-invariant 1-forms $(\omega_1, \omega_3, -\omega_2, \omega_0)$, respectively. Then the nontrivial brackets are
\begin{align*}
[X_1, X_3] &= X_1\, ,\\
[X_2, X_3] &= - X_2\, .
\end{align*} 
By the classification results of \cite{MR3480018}, there exists a co-compact lattice.
\end{proof}

We will give an explicit way to construct a lattice. Consider a subgroup $H\subset G$, where
\begin{equation}
H = \left\{
\left.\left[ {\begin{array}{ccc}
                              
                                   t^{-1} & 0 &  r \\
                                  0 & t & s\\
                                  0 & 0 & 1\\
                                    \end{array} } \right] \right\vert \text{ where }  r, s\in \mathbb{R}
 \text{ and }  t > 0\right\}
\end{equation}
and the inclusion map of $H$ to $G$ is
\begin{equation}
\left[ {\begin{array}{ccc}                              
                                   t^{-1} & 0 &  r \\
                                  0 & t & s\\
                                  0 & 0 & 1\\
                                    \end{array} } \right] 
                                    \longrightarrow
                                    \left[ {\begin{array}{cccc}
                                  1 & 0 & 0 & 0 \\
                                  0 & t^{-1} & 0 &  r \\
                                  0 & 0 & t & s\\
                                  0 & 0 & 0 & 1\\
                                    \end{array} } \right].
\end{equation}
Then $G \cong \mathbb{R}\times H$ as a group. Let $N\subset H$ be the subgroup
\begin{equation}
N = \left\{
\left.\left[ {\begin{array}{ccc}
                              
                                  1 & 0 &  r \\
                                  0 & 1 & s\\
                                  0 & 0 & 1\\
                                    \end{array} } \right] \right\vert \text{ where }  r, s\in \mathbb{R}\right\}.
\end{equation}
\begin{lemma}
$N$ is a normal subgroup of $H$.
\end{lemma}
\begin{proof}
Let $h = \left[ {\begin{array}{ccc}                              
                                   t^{-1} & 0 &  x \\
                                  0 & t & y\\
                                  0 & 0 & 1\\
                                    \end{array} } \right] $ and $n = \left[ {\begin{array}{ccc}
                              
                                  1 & 0 &  r \\
                                  0 & 1 & s\\
                                  0 & 0 & 1\\
                                    \end{array} } \right]$ be any elements of $H$ and $N$, respectively. Then
\begin{equation}
h^{-1}nh = \left[ {\begin{array}{ccc}
                              
                                  1 & 0 &  t\cdot r \\
                                  0 & 1 & t^{-1}\cdot s\\
                                  0 & 0 & 1\\
                                    \end{array} } \right]\in N.
\end{equation}
Hence, $N$ is a normal subgroup of $H$.
\end{proof}
Thus
\[H/N \cong  \left\{
\left.\left[ {\begin{array}{cc}                             
                                  t^{-1} & 0 \\
                                  0 & t \\
                                    \end{array} } \right] \right\vert \text{ where } t > 0\right\}
                                    \]
 is a quotient group.                                   
Let $L_1 = \langle \vec{v}_1, \vec{v}_2  \rangle\subset \mathbb{R}^2$ be a lattice of the normal subgroup $N$, to be determined later. We need to find a lattice $L_2$ of $H/N$ such that the lattice of the group $H$ is
\[L  = \left\{
\left.\left[ {\begin{array}{cc}
                              
                                  \gamma & \vec{v} \\
                                  0 & 1\\
                                    \end{array} } \right] \right\vert \text{ where } \gamma\in L_2, \vec{v}\in L_1\right\}.
                                    \]

From the multiplication rule of the group $H$, this is equivalent to $\gamma \vec{v}\in L_1$ for any $\gamma \in L_2$ and any $\vec{v} =  \left[ {\begin{array}{c}v_1 \\ v_2 \end{array} } \right] \in L_1$. Hence we need to find $a_1, a_2, a_3, a_4\in \mathbb{Z}$  and $c > 0$ and $c\neq 1$ such that
\begin{align}\label{eq:70}
\gamma_c \vec{v}_1 &= a_1 \vec{v}_1 + a_2 \vec{v}_2\, ,\nonumber\\                             
\gamma_c \vec{v}_2 &= a_3 \vec{v}_1 + a_4 \vec{v}_2\, ,                                   
\end{align}
where $\gamma_c$ is the linear transform with transformation matrix $\left[ {\begin{array}{cc}
                              
                                  c^{-1} & 0 \\
                                  0 & c\\
                                    \end{array} } \right]$. Since $\langle\gamma_c \vec{v}_1, \gamma_c \vec{v}_2\rangle$ form a new basis for the lattice $L_1$, then  $\left[ {\begin{array}{cc}
                              
                                  a_1 & a_2 \\
                                  a_3 & a_4\\
                                    \end{array} } \right] \in SL_2(\mathbb{Z})$.
                                    
Thus (\ref{eq:70}) is equivalent to
\begin{equation}
\left[ {\begin{array}{cc}
                              
                                  c^{-1} & 0 \\
                                  0 & c\\
                                    \end{array} } \right] = (\vec{v}_1, \vec{v}_2) \left[ {\begin{array}{cc}
                              
                                  a_1 & a_3 \\
                                  a_2 & a_4\\
                                    \end{array} } \right] (\vec{v}_1, \vec{v}_2)^{-1}
\end{equation}

So we can choose any matrix $S = \left[ {\begin{array}{cc}
                              
                                  A & B \\
                                  C & D\\
                                    \end{array} } \right] \in SL_2(\mathbb{Z})$ such that $(A + D)^2 - 4 > 0$, then we have two real eigenvalues $\lambda_1 > \lambda_2 > 0$. We can set $c = \lambda_1$.  If $(\vec{v}, \vec{w})$ are eigenvectors of $S$ with eigenvalues $(\lambda_1, \lambda_2)$, then we can set $(\vec{v}_1, \vec{v}_2) \propto (\vec{v}, \vec{w})^{-1}$.

\begin{example}                                                                        
Take $S = \left[ {\begin{array}{cc}                            
                                  2 & 1 \\
                                  1 & 1\\
                                    \end{array} } \right]\in SL_2(\mathbb{Z})$, then $c = \frac{3 + \sqrt{5}}{2}$. We can take $\vec{v} = \left[ {\begin{array}{c}
                              
                                  1  \\
                                  \frac{-1 - \sqrt{5}}{2} \\
                                    \end{array} } \right]$ and $\vec{w} = \left[ {\begin{array}{c}
                              
                                  1  \\
                                  \frac{-1 + \sqrt{5}}{2} \\
                                    \end{array} } \right]$.  
Thus
\[(\vec{v}_1, \vec{v}_2) \propto (\vec{v}, \vec{w})^{-1} =   \frac{1}{\sqrt{5}}    \left[ {\begin{array}{cc}
                              
                                   \frac{-1 + \sqrt{5}}{2}  & -1  \\
                                  \frac{1 + \sqrt{5}}{2} & 1 \\
                                    \end{array} } \right].    \]                                   
                                    
Then the lattice of the Lie group $H$ can be
\begin{equation}
L =\left\{\left. \left[ {\begin{array}{ccc}                            
                                   \left(\frac{3 + \sqrt{5}}{2}\right)^{-m_0}& 0 & m_1\left( \frac{-1 + \sqrt{5}}{2}\right) - m_2   \\
                                  0 &  \left(\frac{3 + \sqrt{5}}{2}\right)^{m_0} & m_1 \left(\frac{1 + \sqrt{5}}{2} \right)+ m_2  \\
                                  0 & 0 & 1
                                    \end{array} } \right]
                                    \right\vert
                                    \text{ where } m_0, m_1, m_2\in\mathbb{Z}
                                    \right\}.
\end{equation}

\end{example}
Thus for $b < 0$, there exists a lattice $\Gamma$ such that $G/\Gamma\cong S^1\times H/L$ is compact.

\begin{remark}
In our analysis of the existence of a lattice for $H$, we know that the different lattices correspond to

\begin{enumerate}
\item scaling or change of basis for eigenvectors of a matrix in $SL_2(\mathbb{Z})$
\item different matrices in $SL_2(\mathbb{Z})$ such that the absolute value of trace is greater than 2
\end{enumerate}
\end{remark}

\subsubsection{$b > 0$}
Set $b = \beta^2$, where $\beta > 0$. Then by (\ref{eq:102}), there exist functions $u$ and $v$ such that
\[i\beta\omega_1 + \tilde\omega_2 = e^{-i\beta y} d(u + iv)\]
Take the real and imaginary part of the 1-form, we can get
\[\omega_1 = \frac{1}{\beta}(\cos{\beta y}\, dv - \sin{\beta y}\, du)\]
and
\[\tilde\omega_2 = \cos{\beta y}\, du + \sin{\beta y}\, dv\, .\]
%Take the following discrete subgroup of the symmetry group of the coframe
%\begin{align*}
%\Gamma = \left\{ \left.\left(x_0, \frac{2\pi}{\beta}y_0, u_0, v_0\right) \right| 
%\text{where } x_0, y_0, u_0, v_0\in \mathbb{Z} \right\}  \cong \mathbb{Z}^4,
%\end{align*}
%where $\Gamma$ acts on local coordinate $(x,y,u,v)$ as
%\[(x,y,u,v) \rightarrow \left(x + x_0, y + \frac{2\pi}{\beta}y_0,  u + u_0, v + v_0\right)\, .\]
%We get a compact quotient $\mathbb{R}^4/\Gamma$.
\begin{theorem}
There exists a co-compact lattice.
\end{theorem}
\begin{proof}
Let $(e_1, e_2, e_3, e_4)$ be the left-invariant vectors dual to the left-invariant 1-forms $(\beta\omega_1, \omega_2, \beta\omega_3, \frac{1}{\beta}\omega_4)$, respectively. Then the nontrivial brackets are
\begin{align*}
[e_2, e_3] &= e_1 + e_4\, ,\\
[e_3, e_4] &= e_2\, .
\end{align*} 
Define $X_1 = e_1 + e_4, X_2 = e_2, X_3 = e_3, X_4 = e_1$, then the nontrivivial brackets are
\begin{align*}
[X_1, X_3] &= -X_2\, ,\\
[X_2, X_3] &= X_1\, .
\end{align*} 
By the classification results of \cite{MR3480018}, there exists a co-compact lattice.
\end{proof}
So if $b>0$, we get a compact quotient that supports a homogeneous Lagrangian Engel structure.

In summary, in case 1 we can get a compact quotient if and only if $a = 0$.

\subsection{Analysis of Case 2}
The structure equation is
 \[d\left[ {\begin{array}{c}
                                   \omega_1 \\
                                   \omega_2\\
                                   \omega_3\\
                                   \omega_4\\
                                    \end{array} } \right] =\left[ {\begin{array}{c}
                                   \omega_2\wedge\omega_3 + a\omega_1\wedge\omega_3 + b\omega_1\wedge\omega_4\\
                                b\omega_1\wedge\omega_3 + \omega_3\wedge\omega_4 + b\omega_2\wedge\omega_4\\
                                 0\\
                               0\\
                                    \end{array} } \right],\]
where $a$ and $b$ are constants. We can assume $b \neq 0$, otherwise, this is a special case of case 1.

From the structure equation,
\[d(\omega_1\wedge\omega_2\wedge\omega_3) = 2b\, \omega_1\wedge\omega_2\wedge\omega_3\wedge\omega_4. \]
Since $b \neq 0$, there does not exist a compact quotient that supports a homogeneous Lagrangian Engel structure in case 2.

\subsection{Analysis of Case 3}
The structure equation is
 \[d\left[ {\begin{array}{c}
                                   \omega_1 \\
                                   \omega_2\\
                                   \omega_3\\
                                   \omega_4\\
                                    \end{array} } \right] =\left[ {\begin{array}{c}
                                   \omega_2\wedge\omega_3 + a\omega_1\wedge\omega_3 - \frac{1}{4} a^2\omega_1\wedge\omega_4\\
                                -\frac{1}{4}a^2\omega_1\wedge\omega_3 + \omega_3\wedge\omega_4  -\frac{1}{4}a^2\omega_2\wedge\omega_4\\
                                 \frac{1}{2}a^2b(\omega_1\wedge\omega_3 - \omega_2\wedge\omega_4) + ab\omega_2\wedge\omega_3 - \frac{1}{4}a^3b\omega_1\wedge\omega_4\\
                               ab(\omega_1\wedge\omega_3 - \omega_2\wedge\omega_4) + 2b \omega_2\wedge\omega_3 - \frac{1}{2}a^2b\omega_1\wedge\omega_4\\
                                    \end{array} } \right],\]
where $a$ and $b$ are constants. Since
\[d(\omega_1\wedge\omega_2\wedge\omega_3) = -\frac{a^2}{2}\, \omega_1\wedge\omega_2\wedge\omega_3\wedge\omega_4,\]
there does not exist a compact quotient that supports a homogeneous Lagrangian Engel structure of case 3 if $a \neq 0$. In the following, we assume $a = 0$ and the structure equation is
 \[d\left[ {\begin{array}{c}
                                   \omega_1 \\
                                   \omega_2\\
                                   \omega_3\\
                                   \omega_4\\
                                    \end{array} } \right] =\left[ {\begin{array}{c}
                                   \omega_2\wedge\omega_3 \\
                                 \omega_3\wedge\omega_4\\
                                 0\\
                                2b \omega_2\wedge\omega_3\\
                                    \end{array} } \right].\]
This is a special case of case 1 with $a = 0$. There exists a compact quotient for any $b$.

\subsection{Analysis of Case 4}
The structure equation is
 \[d\left[ {\begin{array}{c}
                                   \omega_1 \\
                                   \omega_2\\
                                   \omega_3\\
                                   \omega_4\\
                                    \end{array} } \right] =\left[ {\begin{array}{c}
                                   \omega_2\wedge\omega_3 + b\omega_1\wedge\omega_3 + a \omega_1\wedge\omega_4\\
                               (a^2 + \frac{ab^2}{4})\omega_1\wedge\omega_2 + a\omega_1\wedge\omega_3 + \omega_3\wedge\omega_4 + a\omega_2\omega_4 \\
                                (2a^3 + \frac{1}{2}a^2b^2)\omega_{12} + \frac{ab^2}{2}\omega_{13} + b\omega_{23} + a^2b\omega_{14} + 2a^2\omega_{24}\\
                               ab(-a - \frac{1}{4})\omega_{12} + ab\omega_{13} + (a - \frac{1}{4})\omega_{23} + (a^2 - \frac{ab^2}{4})\omega_{14} -ab\omega_{24}\\
                                    \end{array} } \right],\]
where $a$ and $b$ are constants.

By the structure equation
\begin{align*}
d(\omega_1\wedge\omega_2\wedge\omega_3) &= 2a\, \omega_1\wedge\omega_2\wedge\omega_3\wedge\omega_4\, ,\\
d(\omega_1\wedge\omega_2\wedge\omega_4) &= -b\, \omega_1\wedge\omega_2\wedge\omega_3\wedge\omega_4
\end{align*}
If $a \neq 0$ or $b \neq 0$, there does not exist a compact quotient that supports a homogeneous Lagrangian Engel structure in case 4. If $a =0$ and $b = 0$, this is a special case of case 1 with compact quotients.

\subsection{Analysis of Case 5}
The structure equation is
 \[d\left[ {\begin{array}{c}
                                   \omega_1 \\
                                   \omega_2\\
                                   \omega_3\\
                                   \omega_4\\
                                    \end{array} } \right] =\left[ {\begin{array}{c}
                                   \omega_1\wedge\omega_3 + \omega_2\wedge\omega_4\\
                                  a\omega_1\wedge\omega_2 + \omega_3\wedge\omega_4\\
                                  a( \omega_1\wedge\omega_3 + \omega_2\wedge\omega_4)\\
                                -a  \omega_1\wedge\omega_2 - \omega_3\wedge\omega_4\\
                                    \end{array} } \right],\]
where $a$ is a constant.

By the structure equation
\[d(\omega_1\wedge\omega_2\wedge\omega_4) = -2\, \omega_1\wedge\omega_2\wedge\omega_3\wedge\omega_4.\]
Thus there does not exist a compact quotient that supports a homogeneous Lagrangian Engel structure in case 5.

\subsection{Analysis of Case 6}
The structure equation is
 \[d\left[ {\begin{array}{c}
                                   \omega_1 \\
                                   \omega_2\\
                                   \omega_3\\
                                   \omega_4\\
                                    \end{array} } \right] =\left[ {\begin{array}{c}
                                   \omega_1\wedge\omega_3 + \omega_2\wedge\omega_4\\
                                  \omega_3\wedge\omega_4\\
                                  0\\
                                 a\omega_2\wedge\omega_3 - \omega_3\wedge\omega_4\\
                                    \end{array} } \right],\]
where $a$ is a constant.

By the structure equation
\[d(\omega_1\wedge\omega_2\wedge\omega_4) = -2\, \omega_1\wedge\omega_2\wedge\omega_3\wedge\omega_4.\]
Thus there does not exist a compact quotient that supports a homogeneous Lagrangian Engel structure in case 6.

\end{proof}

\newpage

\end{document}